\newcommand{\I}{{\mathbf I}}
\newcommand{\T}{{\mathbf T}}
\newcommand{\A}{{\mathbf A}}
\newtheorem{theorem}{Theorem}[section]
\newtheorem{proposition}[theorem]{Proposition}
\newtheorem{lemma}[theorem]{Lemma}
\newtheorem{definition}[theorem]{Definition}
\newtheorem{corollary}[theorem]{Corollary}
\newtheorem{open}[theorem]{Open Question}
\newtheorem{example}[theorem]{Example}
\author{Toufik Mansour\affiliationmark{1}
  \and Howard Skogman\affiliationmark{2}
  \and Rebecca Smith\affiliationmark{3}}
\title{Sorting inversion sequences}
\affiliation{
  Department of Mathematics, University of Haifa, Israel\\
  Excellus BlueCross BlueShield, Rochester, NY, USA\\
  Department of Mathematics, SUNY Brockport, NY, USA}
\keywords{inversion sequence, permutation pattern, sorting algorithm, pop stack, generating tree}
\begin{document}
\publicationdata{vol. 27:1, Permutation Patterns 2024}
{2025}{1}
{10.46298/dmtcs.14010}
{2024-07-31; 2024-07-31; 2025-01-07; 2025-01-14}
{2025-01-29}
\maketitle

\newcommand{\Av}{\operatorname{Av}}

\def\sdwys #1{\xHyphenate#1$\wholeString}
\def\xHyphenate#1#2\wholeString {\if#1$%
\else\say{\ensuremath{#1}}\hspace{2pt}%
\takeTheRest#2\ofTheString
\fi}
\def\takeTheRest#1\ofTheString\fi
{\fi \xHyphenate#1\wholeString}
\def\say#1{\begin{turn}{-90}\ensuremath{#1}\end{turn}}

\newenvironment{onestack}
{
	\begin{scriptsize}
	\psset{xunit=0.0355in, yunit=0.0355in, linewidth=0.02in}
	\begin{pspicture}(0,-2)(32,20)
	\psline{c-c}(0,15)(10,15)
	\psline{c-c}(13,15)(13,2)(19,2)(19,15)
	\psline{c-c}(22,15)(32,15)
	\rput[l](-0.5,12.5){\mbox{output}}
	\rput[r](32,12.5){\mbox{input}}
}
{
	\end{pspicture}
	\end{scriptsize}
}
\newcommand{\fillstack}[3]{%
	\rput[l](-0.5,17.5){\ensuremath{#1}}
	\rput[c](16.1, 7.5){\begin{sideways}{\sdwys{#2}}\end{sideways}}
	\rput[r](32,17.5){\ensuremath{#3}}
}
\newcommand{\stackinput}{%
	\psline[linecolor=darkgray]{c->}(24, 17.5)(16, 17.5)(16, 14)
}
\newcommand{\stackshortinput}{%
	\psline[linecolor=darkgray]{c->}(22.5, 17.5)(16, 17.5)(16, 14)
}
\newcommand{\stackoutput}{%
	\psline[linecolor=darkgray]{c->}(16, 14)(16, 17.5)(10, 17.5)
}
\newcommand{\stackinoutput}{%
	\psline[linecolor=darkgray]{c->}(24, 17.5)(17, 17.5)(17, 13)
	\psline[linecolor=darkgray]{c->}(15,14)(15, 17.5)(10, 17.5)
}
\newcommand{\firstpass}{%
	\rput[c](16, -.5){\text{First pass through the stack.}}
}
\newcommand{\secondpass}{%
	\rput[c](16, -.5){\text{Second pass through the stack.}}
}
\newcommand{\thirdpass}{%
	\rput[c](16, -.5){\text{Third pass through the stack.}}
}

\begin{abstract}
  We consider the avoidance of patterns in inversion sequences that relate sorting via sorting machines including data structures such as stacks and pop stacks.  Such machines have been studied under a variety of additional constraints and generalizations, some of which we apply here.  We give the classification of several classes of sortable inversion sequences in terms of pattern avoidance.  We are able to provide an exact enumeration of some of the sortable classes in question using both classical approaches and a more recent strategy utilizing generating trees.  
\end{abstract}

\section{Introduction}
An \emph{inversion} in a permutation $\pi =\pi_1\pi_2\pi_3\cdots \pi_n$ is a pair of entries $\pi_i,\pi_j$ where $i<j$, but $\pi_i >\pi_j$.  An \emph{inversion sequence}  of length $n$ is a word $e=e_1\ldots e_{n}$ which satisfies for each
$i \in \{1,\ldots,n\}$ the inequalities $0\le e_i\leq i-1$. These sequences correspond to permutations via a variant of the Lehmer code~\cite{laisant:sur-la-numerati:, lehmer:teaching-combin:} by mapping each entry $\pi_j$ of the permutation $\pi$ to $e_{j-1}$ where $e_{j-1}$ is the number of inversions in $\pi$ where $\pi_j$ is the second, smaller entry of the pair.  The set of inversion sequences (respectively the set of inversion sequences of length $n$) is
denoted $\I$ (respectively $\I_n$).  

\begin{example} The permutation $\pi = 51743862$ corresponds to the inversion sequence $e = 01023026$.
\end{example}

The study of pattern avoidance in inversion sequences began with the work of Corteel, Martinez, Savage, and Weselcouch~\citeyear{corteel2016} as well as that of  Shattuck and the first author~\citeyear{inversion2015}.  Since then, extensive work has been done in this area, including~\cite{Lin,LinKim,YanLin} to list a few.  Recently, Kotsireas,  Y\i ld\i r\i m, and the first author~\citeyear{MY} introduced an algorithmic technique involving generating trees to enumerate many pattern classes of inversion sequences (also see \cite{CJM,CM23} and the references therein).  We utilize this algorithm in Section~\ref{S:GenTree} to obtain a generating function for the number of inversion sequences sortable by two particular machines and give the succession rules to build the generating tree for a third machine.

There remain many open problems in characterizing and enumerating the inversion sequences that avoid a class consisting of a small number of relatively short (length less than five) patterns.  Our interest is in the characterization and enumeration of inversion sequences sortable by stacks and pop stacks.  We also study the characterization of inversion sequences sortable by data structures obtained by generalizing or further restricting the pop stacks.

A stack is a last-in, first-out sorting device with push and pop operations.  Knuth~\citeyear{knuth:the-art-of-comp:1} showed the permutation $\pi$ can be sorted by a stack (that is, by applying push and pop operations to the sequence $\pi_1,\dots,\pi_n$ one can output the identity permutation $1,\dots,n$) if and only if $\pi$ avoids the permutation $231$.

\begin{definition}  A permutation $\pi=\pi_1\pi_2\dots\pi_n\in S_n$ is said to \emph{contain} a permutation $\sigma=\sigma_1\sigma_2\ldots\sigma_k$ if there exist indices $1\le \alpha_1<\alpha_2< \ldots <\alpha_k \le n$ such that $\pi_{\alpha_i} < \pi_{\alpha_j}$ if and only if $\sigma_i <\sigma_j$.  Otherwise, we say $\pi$ \emph{avoids} $\sigma$.  

These definitions of \emph{containment} and \emph{avoidance} have been naturally extended to words (and specifically inversion sequences) with the added allowance of entries being equal to one another.
\end{definition}

\begin{example}  The permutation $\pi = 241563$ contains $312$ since the $4,1,3$ appear in the same relative order as $3,1,2$.   However, $\pi$ avoids $321$ since there is no decreasing subsequence of length three in $\pi$.

Similarly, the inversion sequence $\tau = 0021104$ contains $1002$ from $2,1,1,4$, but avoids $201$.
\end{example}

Describing the permutations that are sortable by machines consisting of one or more data structures (such as stacks or pop stacks) is a point of interest in the field.  To this end, it is often possible to describe the sortable machine in terms of its \emph{basis}.

\begin{definition}
A \emph{permutation class} is a downset of permutations under the containment order.  Every permutation class can be specified by the set of minimal permutations which are \emph{not} in the class called its \emph{basis}.  For a set $B$ of permutations, we denote by $\Av(B)$ the class of permutations which do not contain any element of $B$.
\end{definition}

In the same vein, for any set of words $R$, denote by $\I(R)$ the set of inversion sequences in $\I$ that avoid every word in $R$.  Similarly, denote by $\I_n(R)$ the set of inversion sequences of length $n$ that avoid every word in $R$.

As mentioned above, the stack-sortable permutations are the class $\Av(231)$.

A restricted version of a stack that has received a fair amount of attention is a pop stack, first introduced by Avis and Newborn~\citeyear{avis:on-pop-stacks-i:}.  A pop stack supports the same push and pop operations as a standard stack, but whenever a pop operation occurs, every entry in the pop stack is popped immediately.  One foundational result is that the permutations that can be sorted by a single pop stack are exactly those that avoid $231$ and $312$.  Some later works involving pop stacks include~\cite{ABBHL2019, atkinson:pop-stacks-in-p:, Cerbai, CG2019, DW2022,  EG2021, H2022, PS2019,  smith:the-enumeration:}.

Atkinson~\citeyear{atkinson:generalized-sta:} explored a variation of a stack called an $(r,s)$-stack which extends the traditional push and pop operations.  Specifically, the first index of an $(r,s)$ stack allowed one to push an entry not only to the top of the stack but to any of the top $r$ spots.  Similarly, the second index expanded the options of the pop operation to allow one to pop any of the top $s$ entries from the stack.  Thus a $(1,1)$-stack is simply a traditional stack.  Atkinson was able to characterize these permutations when one of $r,s$ was fixed at $1$, as well as in the case when $r=s=2$.  Natural variations on stacks arise when studying inversion sequences that are similar in flavor to these generalized stacks.

Elder~\citeyear{elder:permutations-ge:} considered restricting the depth of a stack, that is the maximum number of entries allowed in a stack at any given time, in the context of generating permutations with two stacks in series.  Elder later continued this study with Lee and Rechnitzer~\citeyear{ELR2016} and then extended to sorting and pop stack sorting with Goh~\citeyear{EG2018, EG2021}.  Although this work focused on a permutation sorting (or generating) machine consisting of two stacks in series, we will consider a similar restriction for a single stack or pop stack when sorting inversion sequences.

Note that natural sorting machines consisting of stacks and/or pop stacks do not give any advantage in sorting a sequence by inserting extra entries that might force a stack or pop stack to be popped at an earlier stage.   As such, the sortable words from any such machine form a class and can be completely characterized by classical pattern avoidance. 

\section{Stack sorting inversion sequences}

We give the standard optimal stack sorting algorithm extended to words in Algorithm~\ref{A:Stack}.  Note that a word is considered sorted if all of its entries appear in weakly increasing order.

\begin{algorithm}
\caption{Stack Sorting Algorithm}\label{A:Stack}
\begin{algorithmic}
\State $w=w_1w_2\cdots w_n$ is a word of length $[n]$.
\State $S$ is a stack.
\State $O$ is an empty array.
\State $i = 1$.
\Procedure{StackSort}{$w,n$}
\While{$i < n+1$}
	\If{$S$ is empty}
		\State push $w_i$ to the top of $S$
		\State $i = i+1$  
	\ElsIf{$w_i \leq $ top element of $S$}
   		 \State push $w_i$ to the top of $S$
		 \State $i = i+1$
   	\Else
		\State pop the top entry of $S$ to the end of $O$
	\EndIf
\EndWhile \\
\Return $O$
\EndProcedure
\end{algorithmic}
\end{algorithm}

For a word $w$ to be stack sortable, $w$ must avoid $120$, the same as avoiding $231$ which is required for permutations to be sortable.  The repetition allowed in words does not change this requirement from being both necessary and sufficient and so the proof does not change either.  It is included below for completeness.

\begin{proposition}~\label{P:stackwords}
The words that are stack sortable are exactly the words that avoid $120$.
\end{proposition}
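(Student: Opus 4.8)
The plan is to prove both directions of the equivalence: a word $w$ is stack sortable by Algorithm~\ref{A:Stack} if and only if $w$ avoids $120$.

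For the \emph{necessity} direction, I would argue by contrapositive: suppose $w$ contains an occurrence of $120$, say at positions $i<j<k$ with $w_i < w_j$ and $w_k \le w_i$ (so $w_k < w_j$ and $w_k \le w_i < w_j$). I want to show the algorithm cannot output a weakly increasing word. The key observation is to track what happens to $w_i$ and $w_k$. At the moment $w_j$ is read, the entry $w_i$ must already have been pushed; since $w_i < w_j$, the algorithm will pop everything above and including something only when it sees an entry strictly larger than the current top — I need to check that $w_i$ is forced out of the stack \emph{before} $w_k$ is ever pushed. Concretely: when $w_j$ arrives, the top of the stack at that moment, call it $t$, satisfies $t < w_j$ (that is why a pop is triggered), and in fact every entry popped at this stage is an entry that sits at or above $w_i$ in the stack, so $w_i$ itself gets popped to the output no later than when $w_j$ is finally pushed. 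Hence $w_i$ appears in the output before $w_k$ is even read, and since $w_k \le w_i$, the output is not weakly increasing. The main care here is the case $w_k = w_i$: equal entries are allowed, and "weakly increasing" permits them, so I must make sure the occurrence of $120$ genuinely forces $w_i$ out before $w_k$ and that having $w_k$ appear after $w_i$ with $w_k \le w_i$ is only a violation when... actually $w_i$ before $w_k$ with $w_i \ge w_k$ is fine for weak increase only if $w_i = w_k$; wait — weakly increasing means each term is $\le$ the next, so $w_i$ occurring before $w_k$ with $w_i > w_k$ is a violation, but $w_i = w_k$ is not. So I should use the pattern definition carefully: in the word pattern $120$, the "$0$" entry must be \emph{strictly} less than the "$1$" entry. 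Rechecking the paper's containment definition for words: it says $\pi_{\alpha_i} < \pi_{\alpha_j}$ iff $\sigma_i < \sigma_j$, so an occurrence of $120$ requires $w_i < w_j$, $w_i < w_k$ is false — rather $w_k < w_i$ is false and $w_k = w_i$? The pattern $120$ has third letter $0 <$ first letter $1$, so we need $w_k < w_i$ strictly. Good — that resolves the subtlety: an occurrence of $120$ forces $w_k < w_i$, and $w_i$ popped before $w_k$ is read yields a strict descent in the output. That is the heart of necessity.

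For the \emph{sufficiency} direction, I would show that if $w$ avoids $120$ then the output $O$ is weakly increasing. The standard approach is an invariant argument on the stack contents during execution. I claim that throughout the algorithm, reading the stack from bottom to top gives a weakly increasing sequence (this follows immediately from the push condition: we only push $w_i$ when the stack is empty or $w_i \le$ the current top, so entries increase going downward, i.e., the stack is weakly increasing bottom-to-top... actually $w_i \le \text{top}$ means the new top is $\le$ old top, so reading bottom-to-top the stack is weakly \emph{decreasing}). Let me restate: the stack, read from bottom to top, is weakly decreasing. Then the output is built by popping the top whenever an incoming $w_i$ exceeds it. I want: each entry appended to $O$ is $\ge$ the previously appended entry. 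Suppose not; then at some pop, an entry $a$ is output, and later an entry $b < a$ is output. The entry $b$ was somewhere in the stack when $a$ was output, or entered later. If $b$ was in the stack below $a$ at the time $a$ was popped, that contradicts the bottom-to-top-decreasing invariant (anything below $a$ is $\ge a$). So $b$ must have been pushed after $a$ was popped. Then, letting $c$ be the entry that was read when $a$ got popped (so $c > a$, and $c$ is read before $b$), and letting $d$ be the entry just below $a$ in the stack at that time (or noting the subsequent behavior), I can extract a $120$ pattern: the positions of $a$, then $c$ (with $a < c$), then $b$ (with $b < a < c$) — we need $b$ to come after $c$ in the word, which holds since $b$ is pushed after $a$ is popped, and $a$ is popped because $c$ is being read, so $c$'s position precedes $b$'s. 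This gives an occurrence of $120$ (values $a, c, b$ playing roles $1, 2, 0$ since $b < a < c$), contradicting avoidance. I also need to handle the entries never popped (left in the stack at the end) — but the algorithm as written terminates the while loop with entries possibly still in $S$; I should note whether those get flushed. Looking at the pseudocode, the loop runs while $i < n+1$ and returns $O$ without flushing $S$, so strictly the output omits leftover stack entries; I will assume (as is standard, and consistent with "optimal stack sorting") an implicit final flush popping $S$ onto $O$ in order, which preserves weak monotonicity by the same invariant.

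The main obstacle I anticipate is purely bookkeeping: carefully pinning down, in the necessity direction, the exact moment $w_i$ leaves the stack relative to when $w_k$ is read, and in the sufficiency direction, correctly identifying the three positions that form the forbidden $120$ pattern (especially ensuring the middle "$2$"-role entry $c$ really does sit strictly between the positions of $a$ and $b$ in $w$, and that the value inequalities $b < a < c$ are all strict as required by the word-pattern containment definition). Once those are nailed down, both directions are short. Since, as the paper notes, this mirrors Knuth's classical argument for $\Av(231)$ with only the cosmetic change that repeated values are permitted and "sorted" means weakly increasing rather than strictly, I would keep the write-up brief and emphasize only where the equality of entries requires a second look — which, as discussed above, is precisely the point that the "$0$" in the pattern $120$ is a strict lower value, so no genuine new case arises.
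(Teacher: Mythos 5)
Your proof is correct and takes essentially the same route as the paper's: in one direction the ``$2$'' of a $120$ occurrence forces the ``$1$'' into the output before the ``$0$'' enters the stack, and in the other an inversion in the output yields the three entries of a $120$ pattern (your $a,c,b$ are the paper's $b,c,a$). Your added care about the strictness of the ``$0$'' in the pattern and the implicit final flush of the stack are sound refinements of that same argument.
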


\begin{proof}
The $2$ of a $120$ pattern forces the smaller entry $1$ to the output before the $0$ even enters the stack which creates an inversion with the $1,0$.  Hence words containing a $120$ pattern are not stack sortable.

Next consider a word $w$ that is not stack sortable.  The output  $S(w)$ must contain an inversion, say an entry $b>a$ that appears before $a$.  For $a$ to not have exited the stack before $b$, we know $a$ appears after $b$ in $w$.  Furthermore, for $b$ to have been forced out of the stack before $a$ entered, there must have been a larger entry $c$ that appeared after $b$, but before $a$ to force $b$ out before $a$ could enter the stack.  Thus $w$ contains a $120$ pattern $bca$.
\end{proof}

As inversion sequences are words with added restrictions, Proposition~\ref{P:stackwords} also gives us a basis for the stack sortable inversion sequences.

\begin{corollary}
The inversion sequences that are stack sortable are exactly those that avoid $120$.
\end{corollary}

The enumeration of these sortable inversion sequences has been studied, but thus far remains open.  See \cite[Section 4]{inversion2015} and \cite{corteel2016} for initial study and further work on avoiding $120$ in~\cite{YanLin}.

\section{Pop stack sorting inversion sequences}

The optimal pop stack sorting algorithm is mostly the same procedure as that for stacks, but when a pop stack is popped, all entries must exit the stack as described in Algorithm~\ref{A:PStack}.  Note that the end result of this algorithm is a reversal of each maximal weakly descending run of the input sequence.

\begin{algorithm}
\caption{Pop Stack Sorting Algorithm}\label{A:PStack}
\begin{algorithmic}
\State $w=w_1w_2\cdots w_n$ is a word of length $[n]$.
\State $PS$ is a pop stack.
\State $O$ is an empty array.
\State $i = 1$.
\Procedure{PopStackSort}{$w,n$}
\While{$i < n+1$}
	\If{$PS$ is empty}
		\State push $w_i$ to the top of $PS$
		\State $i = i+1$  
	\ElsIf{$w_i \leq $ top element of $PS$}
   		 \State push $w_i$ to the top of $PS$
		 \State $i = i+1$
   	\Else
		\While{$PS$ is nonempty}
			\State pop the top entry of $PS$ to the end of $O$
		\EndWhile
	\EndIf
\EndWhile \\
\Return $O$
\EndProcedure
\end{algorithmic}
\end{algorithm}

Note that $120, 201$ are the patterns whose avoidance completely determines the pop stack sortability of a permutation.  However, due to the repetition allowed in words, a copy of a larger element already in the stack can force the entire stack to be popped while there is still a copy of a smaller entry appearing later in the permutation.  

We note that  Cerbai~\citeyear[Theorem 5.2]{Cerbai}  found the basis for sortable Cayley words in the context of a \emph{hare pop stack} which is simply a pop stack that only allows entries to appear in weakly decreasing order from top to bottom (which is a necessary requirement for sorting with a single stack). This basis is the same as the one in Theorem~\ref{T:popwords} below.  Cerbai's argument could also be extended to sorting general words by a pop stack as we have done here.  

\begin{theorem}~\label{T:popwords}
The words that are pop stack sortable are exactly the words that avoid $120,201,$ and $1010$.
\end{theorem}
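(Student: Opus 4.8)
The plan is to mimic the structure of the proof of Proposition~\ref{P:stackwords}: first show that containing any of the three patterns $120$, $201$, or $1010$ obstructs sortability, then show that a word avoiding all three is necessarily sorted by Algorithm~\ref{A:PStack}. For the forward (necessity) direction, I would recall from Algorithm~\ref{A:PStack} that the output is obtained by reversing each maximal weakly descending run of the input. So I would examine how a forbidden pattern would have to split across runs. For $120$: the $2$ triggers a pop that ejects the $1$ to the output before the $0$ is even read, so $1$ precedes $0$ in the output, an inversion. For $201$: the $2$ and the $0$ might be pushed together (they are weakly decreasing), but the larger $1$ arriving afterward forces a pop that sends the $2$ out before (or at the same position relative to issues with) the later... actually the key point is the $2$ sits above nothing forcing it out before the final $1$? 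I would argue carefully that the occurrence $a c b$ with $a > b$, reading positions of a $201$ as values $x, z, y$ with $z < y < x$: the $z$ after $x$ can be pushed on top of $x$ (weakly decreasing), then $y > z$ forces both out, $x$ then $z$... wait that outputs $z$ before $x$? Let me instead note that the decreasing run containing the "$2$" entry must end before the "$1$" entry, so the $2$ is output; meanwhile the $0$ entry, being smaller, is output no earlier than the $2$ within its own reversed run, and one checks the $2$ ends up before the $0$. For $1010$ I would track the two equal large entries: the first "$1$" gets popped by the second "$1$" arriving (strictly greater than the intervening "$0$"? no — equal entries do not force a pop, so I must use that the second "$0$" — wait $1010$ has entries $a,b,a,b$ with $a>b$; the third entry equals the first and the fourth equals the second) — here the subtlety Cerbai identified: a copy of a larger element already present forces the whole stack out even though a smaller copy comes later. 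I would spell out exactly which push/pop sequence forces the inversion.

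For the sufficiency direction I would take a word $w$ avoiding $120$, $201$, $1010$ and show $\textsc{PopStackSort}(w)$ is weakly increasing. Since avoiding $120$ already guarantees (by Proposition~\ref{P:stackwords}) that $w$ is \emph{stack} sortable, the only way the pop-stack output can fail to be sorted is through the extra rigidity of the pop operation — namely, when a pop is forced, entries that a regular stack would have kept are ejected. So I would suppose for contradiction that the output has an inversion $b > a$ with $b$ appearing before $a$, and trace back through the algorithm: $a$ must appear after $b$ in $w$ (else $a$ leaves first), and $b$ must have been forced out of the pop stack before $a$ entered, which happens only when some entry $c$ with $c > (\text{current top})$ arrives after $b$ but before $a$. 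If $c > b$ this already gives (with $b$, $c$, $a$) a $120$ pattern in $w$ unless $a \ge $ something — so I'd have to be careful and split into cases $c > b$, $c = b$ (which needs $c$ to still have been strictly above the stack's then-top to trigger the pop — this forces another smaller entry between, producing $1010$ or $201$), etc., in each case exhibiting one of the three forbidden patterns on explicit positions of $w$.

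The main obstacle I anticipate is the sufficiency direction's case analysis, specifically disentangling the situations created by \emph{equal} entries: a pop is triggered by a strict inequality ($w_i >$ top), so whenever a "forced pop with equality somewhere" occurs, there must be a strictly smaller entry wedged in between, and pinning down its position relative to the other entries is exactly what produces $1010$ versus $201$ versus $120$. I would handle this by introducing names for the relevant entries and their positions (say the last pop before $a$ enters is triggered at position $p$ by entry $c$, the top of the stack at that moment came in at position $q$, and $b$ came in at position $r < q \le p$), then showing $(w_q, w_p, w_a)$ or $(w_b, w_q, w_p, \dots)$ realizes a forbidden pattern according to the order relations among $b, c$, and the stack top. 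A secondary, easier obstacle is being precise about the "reversal of maximal weakly descending runs" characterization so that the necessity arguments for $201$ and $1010$ are airtight rather than hand-wavy; I would state that characterization as a small claim (or cite it as already noted after Algorithm~\ref{A:PStack}) and then just compute where each of the four (or three) pattern entries lands in the output.
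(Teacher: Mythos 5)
Your sufficiency argument is essentially the paper's: assume the greedy output contains an inversion $(y,x)$, locate the forced pop (an entry $d$ arriving while a smaller top $b$ sits on the stack and $x$ is still in the input), and split into cases according to whether $y$ is the top or lies deeper, which produces $120$, $201$, or a four-letter configuration whose only new reduction is $1010$ --- this is exactly the decomposition in the paper, equality subtleties and all. Where you genuinely differ is the necessity direction: the paper merely checks that the three words $120$, $201$, $1010$ are themselves unsortable and then invokes the remark at the end of the introduction that the sortable words form a class under containment, whereas you propose to compute directly where the pattern letters land under the ``reverse each maximal weakly descending run'' description of the output of Algorithm~\ref{A:PStack}. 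Your route works and is self-contained at the level of the output description, but note two points. First, it tacitly assumes optimality of the greedy algorithm (a word is sortable only if Algorithm~\ref{A:PStack} sorts it); the paper's heredity argument avoids that dependency, since unsortability of a three- or four-letter word can be verified over all operation sequences. Second, your bookkeeping for $201$ is off as written: if the ``2'' and the ``0'' lie in the same weakly descending run, the ``0'' is output \emph{before} the ``2'', so the inversion to exhibit is the ``2'' preceding the ``1'' (whose run necessarily begins after the run containing the ``0'' ends); likewise for $1010$ the inversion to exhibit is the first ``1'' preceding the final ``0''. With those pairs chosen correctly your necessity cases go through in one line each, so the plan is sound; the paper's version of that half is simply shorter and dependency-free.
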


\begin{proof}
One can check that none of $120,201,$ and $1010$ are pop stack sortable.  As such, a word that contains any of these patterns cannot be sorted by a pop stack.

Conversely, suppose $w$ is not pop stack sortable.  Then the image of $w$ under the pop stack operator, which we will denote by $PS(w)$, must contain an inversion $(y,x)$.  That is, $y> x$ and $y$ appears before $x$ in $PS(w)$.  Consider what happens to force the entry $y$ to the output before $x$.  

Using Algorithm~\ref{A:PStack}, there must be a point when $y$ is in the stack and an entry $d$ that appears (immediately) after a smaller entry $b$ in $w$. Because $b<d$ and $b$ appears on the top of the pop stack when $d$ is the next entry of $w$, the pop stack must be popped before $d$ can enter.  As $x$ appears after $y$ in $PS(w)$, it must be that $x$ is still in the input at this state.  These entries (if they are all distinct) are shown before entering the pop stack (left) and in the state where $b$ just entered the pop stack (right) in Figure~\ref{F:popstack_fail}.

If $y=b$, then $x$ appears after $d$ in $w$.  Thus $bdx$ forms a $120$ pattern in $w$.  

Otherwise, there exists an entry $y>b$ that is also forced out before $d$ enters the pop stack but before $x$.  Thus, $y$ appears before $b$ in $w$.  Either we can assume $x=d$ (that is, $y>d$) or if $x<d$, then $x$ appears after $d$ in $w$.  In the former case, $ybd$ is a $201$ pattern in $w$.  In the latter case, $x<d$ appears after $d$, so consider the pattern $ybdx$ in $w$ where we are reduced to the options $d \geq y > x ,b$.  Thus, $ybdx$ is one of the following patterns $1010, 1020, 2021, 2120, 2031, 2130$.  Of these patterns, only $1010$ does not contain one of the smaller forbidden patterns $120, 201$.
\end{proof}

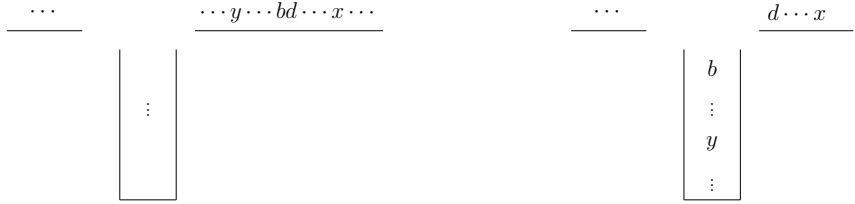
\begin{figure}
\begin{center}
\scalebox{0.5}{\begin{tikzpicture}
	\begin{scope}
					\draw (3.5,1) -- (5,1);
					\draw (3.5,1) -- (3.5,5);
					\draw (5,1) -- (5,5);
					\draw (5.5, 5.5) -- (10.5,5.5);
					\draw (0.5, 5.5) -- (2.5,5.5);
					\node[][align=left] at (1.5,6) {\fontsize{18}{22.4}\selectfont$\cdots $};
					\node[][align=left] at (8,6) {\fontsize{18}{22.4}\selectfont$\cdots y \cdots bd \cdots x \cdots$};
					\node[][align=left] at (4.25,3.5) {\fontsize{18}{22.4}\selectfont$\vdots$};
	\end{scope}
	
	\begin{scope}[xshift = 15cm]
					\draw (3.5,1) -- (5,1);
					\draw (3.5,1) -- (3.5,5);
					\draw (5,1) -- (5,5);
					\draw (5.5, 5.5) -- (8,5.5);
					\draw (0.5, 5.5) -- (2.5,5.5);
					\node[][align=left] at (1.5,6) {\fontsize{18}{22.4}\selectfont$\cdots $};
					\node[][align=left] at (6.5,6) {\fontsize{18}{22.4}\selectfont$d \cdots x$};
					\node[][align=left] at (4.25,1.5) {\fontsize{18}{22.4}\selectfont$\vdots$};
					\node[][align=left] at (4.25,2.5) {\fontsize{18}{22.4}\selectfont$y$};
					\node[][align=left] at (4.25,3.5) {\fontsize{18}{22.4}\selectfont$\vdots$};
					\node[][align=left] at (4.25,4.5) {\fontsize{18}{22.4}\selectfont$b$};
	\end{scope}

\end{tikzpicture}}
\end{center}
\caption{Moving to the state where $y$ will be forced to exit the pop stack before $x$ enters}
\label{F:popstack_fail}
\end{figure}

As inversion sequences are words with added restrictions, Theorem~\ref{T:popwords} also gives us a basis for the pop stack sortable inversion sequences.

\begin{corollary}
The inversion sequences that are pop stack sortable are those in  $\I(120, 201, 1010)$.
\end{corollary}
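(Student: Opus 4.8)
The plan is to obtain the corollary as an immediate specialization of Theorem~\ref{T:popwords}. The pop stack sorting operator defined by Algorithm~\ref{A:PStack} acts on an arbitrary word of length $n$, and nothing in its push and pop rules refers to the defining constraint $0\le e_i\le i-1$ that distinguishes an inversion sequence from a general word. Likewise, containment and avoidance of a pattern are, by definition, the same notions for inversion sequences as for words. Consequently, for a fixed $e\in\I$, the two assertions ``$e$ is pop stack sortable'' and ``$e$ avoids $120$, $201$, and $1010$'' are precisely the two sides of Theorem~\ref{T:popwords} read for the particular word $e$.

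So the argument is essentially one line: let $e\in\I$ and regard $e$ as a word. By Theorem~\ref{T:popwords}, $e$ is pop stack sortable if and only if $e$ avoids each of $120$, $201$, and $1010$, that is, if and only if $e\in\I(120,201,1010)$. Intersecting this equivalence with the set $\I$ shows that the pop stack sortable elements of $\I$ are exactly $\I(120,201,1010)$, as claimed.

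There is no genuine obstacle here; the entire content lies in Theorem~\ref{T:popwords}, and the corollary merely records that restricting the ambient universe from all words to inversion sequences alters neither the sorting procedure nor the avoidance relation. The one point worth a remark — a sanity check rather than a logical step — is that each of the three basis patterns is actually realized inside a bona fide inversion sequence (for instance $0120$ contains $120$, $00201$ contains $201$, and $001010$ contains $1010$), so none of the forbidden patterns is vacuous for inversion sequences and the basis cannot be shortened on these grounds.
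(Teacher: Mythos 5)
Your argument is correct and matches the paper's own treatment: the corollary is stated there as an immediate specialization of Theorem~\ref{T:popwords}, justified by the single observation that inversion sequences are words with added restrictions, exactly as you argue. Your closing remark about each basis pattern being realizable inside an inversion sequence is a harmless extra (it concerns minimality of the basis rather than the statement itself) and does not change the fact that your route is the same as the paper's.
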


The pop stack sortable permutations are also known as \emph{layered} permutations which are permutations of the form $\pi = \sigma_1 \sigma_2 \cdots \sigma_k$ where each \emph{layer} $\sigma_i$ is a maximal length contiguous decreasing sequence such that all of the entries of $\sigma_i$ are less than all of the entries of $\sigma_{i+1}$.  For example, the permutation $\pi = 543216987$ is a layered permutation with layers $\sigma_1=54321, \sigma_2=6, \sigma_3=987$ as shown on the left in Figure~\ref{F:layered}.

\begin{figure}
\begin{center}
\scalebox{0.3}{\begin{tikzpicture}
					\draw (0,0) rectangle (9,9);
					\draw (0,0) rectangle (5,5);		
					\fill[black] (.5,4.5) circle (.2cm);
					\fill[black] (1.5,3.5) circle (.2cm);
					\fill[black] (2.5,2.5) circle (.2cm);
					\fill[black] (3.5,1.5) circle (.2cm);
					\fill[black] (4.5,0.5) circle (.2cm);
					\draw (5,5) rectangle (6,6);
					\fill[black] (5.5,5.5) circle (.2cm);
					\draw (6,6) rectangle (9,9);
					\fill[black] (6.5,8.5) circle (.2cm);
					\fill[black] (7.5,7.5) circle (.2cm);
					\fill[black] (8.5,6.5) circle (.2cm);
					\draw (4,-1) node[anchor =north]  {\Huge $\pi = 543216987$};
					
					\draw (20,0) rectangle (29,9);
					\draw (20,0) rectangle (24,2.5);		
					\fill[black] (20.5,2.5) circle (.2cm);
					\fill[black] (21.5,2.5) circle (.2cm);
					\fill[black] (22.5,1.5) circle (.2cm);
					\fill[black] (23.5,0.5) circle (.2cm);
					\draw (24,2.5) rectangle (27,3.5);
					\fill[black] (24.5,3.5) circle (.2cm);
					\fill[black] (25.5,3.5) circle (.2cm);
					\draw (27,3.5) rectangle (29,9);
					\fill[black] (26.5,2.5) circle (.2cm);
					\fill[black] (27.5,7.5) circle (.2cm);
					\fill[black] (28.5,5.5) circle (.2cm);
					\draw (24,-1) node[anchor =north]  {\Huge $w=211033275$};
\end{tikzpicture}}
\end{center}
\caption{The layered permutation $\pi=543216987$ and the layered word $w=211033275$.}
\label{F:layered}
\end{figure}
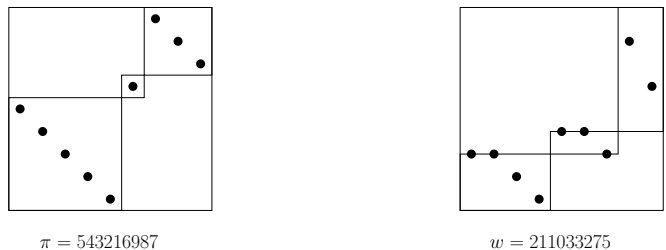

We can define layered words similarly as follows.

\begin{definition}  A \emph{layered} word $w$ is a word $w = \tau_1\tau_2\cdots \tau_k$ where each $\tau_i$ is
\begin{enumerate}
\item maximal in length,
\item weakly decreasing, and
\item such that the last (weakly smallest) entry of $\tau_{i+1}$ is at least as large as the first (weakly largest) entry of $\tau_i$ for all $i$.
\end{enumerate}
\end{definition}

For example, the word $w = 211033275$ is a layered word with layers $2110, 332,75$ shown on the right in Figure~\ref{F:layered}.

 It can then be seen that these layered words are exactly the pop stack sortable words.

\begin{proposition}  Layered words are exactly the words that avoid $120, 201,1010$.
\end{proposition}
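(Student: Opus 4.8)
The plan is to sidestep a direct pattern analysis and instead identify the layered words with the pop stack sortable words, after which Theorem~\ref{T:popwords} finishes the job. So I would prove the equivalent statement: a word $w$ is layered if and only if it is pop stack sortable. The first thing to pin down is that conditions (1) and (2) in the definition of a layered word already force a \emph{unique} decomposition of $w$: requiring each block to be weakly decreasing and maximal in length means that a new block begins at position $j$ exactly when $w_j>w_{j-1}$, i.e. $\tau_1,\dots,\tau_k$ must be the maximal weakly decreasing runs of $w$. Hence "layered'' is an intrinsic property of $w$, and it amounts to saying that this maximal-weakly-decreasing-run decomposition additionally satisfies (3), namely $\max\tau_i\le\min\tau_{i+1}$ for every $i<k$.

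Next I would invoke the description of the pop stack operator recorded after Algorithm~\ref{A:PStack}: between two consecutive pop events the algorithm only pushes, and what it pushes is precisely one maximal weakly decreasing run, which is then emptied onto the output in reverse (LIFO) order. Consequently $PS(w)=\tau_1^{R}\tau_2^{R}\cdots\tau_k^{R}$, where $\tau_i^{R}$ denotes the reversal of $\tau_i$. Now $w$ is pop stack sortable exactly when $PS(w)$ is weakly increasing. Each block $\tau_i^{R}$ is weakly increasing, being the reversal of a weakly decreasing word, so the concatenation $\tau_1^{R}\cdots\tau_k^{R}$ is weakly increasing if and only if, for each $i<k$, the last entry of $\tau_i^{R}$ is at most the first entry of $\tau_{i+1}^{R}$. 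But the last entry of $\tau_i^{R}$ is the first — hence weakly largest — entry of $\tau_i$, and the first entry of $\tau_{i+1}^{R}$ is the last — hence weakly smallest — entry of $\tau_{i+1}$; so this is exactly the inequality $\max\tau_i\le\min\tau_{i+1}$, i.e. condition (3). Therefore $w$ is pop stack sortable if and only if $w$ is layered, and by Theorem~\ref{T:popwords} this happens if and only if $w$ avoids $120$, $201$, and $1010$.

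Each step here is essentially immediate, so there is no real obstacle; the one place to be mildly careful is the observation that conditions (1)--(2) force the run decomposition (so that "layered'' is well defined and coincides with the decomposition the pop stack actually produces) together with the run-reversal description of $PS(w)$ — both of which fall straight out of the statement of Algorithm~\ref{A:PStack}. For completeness one could argue the proposition directly instead: a layered word contains none of $120$, $201$, $1010$ because any strict descent $w_i>w_j$ with $i<j$ must occur within a single layer (entries of later layers weakly dominate all entries of earlier layers), which on a short case check rules out each forbidden pattern; conversely, if the run decomposition of $w$ violates (3) at some boundary $i$, then inspecting the first and last entries of $\tau_i$ and $\tau_{i+1}$ always exhibits an occurrence of $120$, $201$, or $1010$. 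I would present the route through Theorem~\ref{T:popwords} as the main proof, since it is shorter and avoids the bookkeeping of this last case analysis.
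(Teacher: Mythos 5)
Your proof is correct, and it reorganizes the argument in a way that differs from the paper on one of the two directions. The paper proves the forward implication (layered $\Rightarrow$ avoids $120$, $201$, $1010$) by a direct pattern argument: if one of the forbidden patterns occurred, the layer containing its final small entry would sit strictly later than the layer containing an earlier larger entry, contradicting condition (3). Only for the converse does the paper pass through sortability, observing as you do that the pop stack's output is the concatenation of the reversed maximal weakly decreasing runs and that sortability forces condition (3) at each run boundary. You instead prove the single equivalence ``layered $\iff$ pop stack sortable'' in both directions and cite Theorem~\ref{T:popwords} once; your converse coincides with the paper's, while your forward direction replaces the pattern-by-pattern check with the run-reversal description of $PS(w)$. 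What your route buys is uniformity and an explicit standalone identification of layered words with pop stack sortable words (it also makes visible, as the paper does not, that conditions (1)--(2) already force the decomposition to be the maximal weakly decreasing run decomposition, so ``layered'' is well defined); what it costs is that both implications now lean on the optimality of Algorithm~\ref{A:PStack}, i.e.\ on the tacit equivalence ``sortable $\iff$ $PS(w)$ is weakly increasing,'' whereas the paper's direct forward argument needs nothing about the sorting machine at all. Either way the proposition stands.
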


\begin{proof}  Suppose $w$ is layered.  Then $w$ cannot contain $120$ because if not, the layer containing the $0$ of the pattern must appear after the layer containing the $1$ which violates the third condition.  The same argument can be made for the $1$ of the $201$ pattern appearing in a later consecutive decreasing pattern than that of the $2$ and once again for the second $0$ of the $1010$ pattern appearing in a later consecutive decreasing pattern than that of the first $1$.

Now suppose $w$ avoids $120, 201,1010$.  Then $w$ is pop stack sortable.  Consider the output from sorting $w$ broken into segments based on the pops.  Reversing those segments reverts the word back to $w$.  In addition, according to the algorithm, these breaks occur exactly at the strict ascents in $w$.  The segments then are weakly decreasing so that the smallest entry of any segment is at least as large as the largest entry of the previous segment for $w$ to be sortable and also maximal in length.  Hence $w$ is layered.
\end{proof}

In Section~\ref{S:popgen}, we are able to describe the generating tree for layered inversion sequences.  There are also some partial results we can obtain combinatorially based on the number of layers (which is also the number of pops needed to sort the sequence).  Recall that the Eulerian numbers $E(n,k)$ count the number of permutations of length $n$ with $k-1$ descents.

\begin{proposition}  The number of pop stack sortable inversion sequences of length $n$ with $k=1,2$ layers are counted by $E(n,k)$.
\end{proposition}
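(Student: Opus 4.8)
The plan is to use the fact (established above) that the pop stack sortable inversion sequences are exactly the layered ones, and to enumerate directly those having one layer and those having two layers, comparing the answers with $E(n,1)=1$ and $E(n,2)=2^n-n-1$. The case $k=1$ is immediate: a one-layer inversion sequence is a weakly decreasing inversion sequence $e_1\ge e_2\ge\cdots\ge e_n$, and $0\le e_1\le 0$ forces $e_1=0$, whence $e=0^n$ is the unique possibility, matching $E(n,1)=1$.

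For $k=2$ I would first determine the shape of a two-layer inversion sequence $w=\tau_1\tau_2$. Applying the argument above to $\tau_1$ gives $\tau_1=0^{\,j}$ for some $1\le j\le n-1$, and $\tau_2=e_{j+1}\cdots e_n$ is a nonempty weakly decreasing word. Maximality of $\tau_1$ is precisely the requirement of a strict ascent between the layers, i.e.\ $e_{j+1}\ge 1$; the third layered-word condition (the last entry of $\tau_2$ is $\ge$ the first entry of $\tau_1$) holds automatically since that first entry is $0$. The only constraint left is that $w$ actually be an inversion sequence, i.e.\ $e_i\le i-1$, and since $\tau_2$ is weakly decreasing this collapses to the single inequality $e_{j+1}\le j$: for $i\ge j+2$ one already has $e_i\le e_{j+1}\le j< i-1$. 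Thus the two-layer inversion sequences with $|\tau_1|=j$ are exactly the words $0^{\,j}\tau_2$ with $\tau_2$ a weakly decreasing, not-identically-zero word of length $n-j$ over $\{0,1,\dots,j\}$.

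Finally I would count. Weakly decreasing words of length $n-j$ over a $(j+1)$-letter alphabet correspond to size-$(n-j)$ multisets, of which there are $\binom{n}{j}$; removing $0^{\,n-j}$ leaves $\binom{n}{j}-1$. Summing over the first-layer length, $\sum_{j=1}^{n-1}\bigl(\binom{n}{j}-1\bigr)=(2^n-2)-(n-1)=2^n-n-1=E(n,2)$. The only genuinely substantive step is the reduction of the inversion-sequence condition on $\tau_2$ to $e_{j+1}\le j$; everything else is routine bookkeeping. One may note that $\binom{n}{j}-1$ is also the number of permutations of $[n]$ whose unique descent is at position $j$, which both explains the final equality and supplies, if desired, a bijective proof for $k=2$.
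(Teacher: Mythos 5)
Your proof is correct, and it takes a different route from the paper's. The paper does not compute anything: it observes that under the Lehmer-code variant defining inversion sequences, the permutations with $k-1$ descents ($k=1,2$) correspond \emph{exactly} to the sortable inversion sequences with $k$ layers (a permutation with its unique descent at position $i$ yields $i$ initial zeros, then a positive entry, then a weakly decreasing tail), so the count is $E(n,k)$ by definition of the Eulerian numbers. You instead enumerate the two-layer inversion sequences directly --- reducing the inversion-sequence constraint on the second layer to the single inequality $e_{j+1}\le j$, counting weakly decreasing words as multisets to get $\binom{n}{j}-1$ per first-layer length $j$, and summing to $2^n-n-1$ --- and then match this against the closed form of $E(n,2)$. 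Your reduction step and the multiset count are both sound, and the approach has the advantage of yielding the refined statistic $\binom{n}{j}-1$ explicitly; its cost is that it needs the formula $E(n,2)=2^n-n-1$ as external input, whereas the paper's bijection explains structurally why Eulerian numbers appear (and your closing remark, identifying $\binom{n}{j}-1$ with the permutations whose descent is at position $j$, essentially recovers that bijection).
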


\begin{proof}  In fact, the pop sortable inversion sequences of length $n$ with $k=1,2$ layers correspond exactly to the permutations of length $n$ with $k-1$ descents by the variant of Lehmer code used to create inversion sequences.  In the trivial case of $k=1$, a permutation of length $n$ with $k-1=0$ descents is the identity permutation which is the only permutation of length $n$ to have an inversion sequence with one layer, namely the inversion sequence consisting of all $0$s.

Now consider the case when $k=2$.  A permutation $\pi$ with exactly one descent, say at index $i$, will have an inversion sequence that begins with $i$ 0s which will be the first layer.  Since the permutation $\pi$ is such that $\pi_{i+1}$ is the smaller entry in an inversion with at least $\pi_i$, the corresponding inversion sequence now has a positive entry in position $i+1$.  As the remainder of the entries of $\pi$ are all in increasing order, each subsequent entry of $\pi$ can be the smaller entry of at most as many inversions as the previous entry.  Thus, the rest of the inversion sequence is in weakly decreasing order meaning that the entries from position $i+1$ to position $n$ form the second and final layer.
\end{proof}

As not all inversion sequences are pop stack sortable, this correspondence can clearly not continue indefinitely.  Indeed, a permutation with even $k-1 =2$ descents is not guaranteed a corresponding pop stack sortable inversion sequence.  For example, the permutation $\pi = 3214$ has the inversion sequence $e(\pi) = 0120$, which cannot be sorted by a pop stack.

At the other extreme, only one inversion sequence of any given length $n \geq 1$ can have $n$ layers, namely the increasing sequence $012\ldots(n-1)$.  Reducing the number of layers by one to $n-1$ layers gives a set of inversion sequences enumerated by the Tetrahedral numbers, sequence number A000292 in OEIS~\cite{OEIS}.

\begin{proposition}  The number of pop stack sortable inversion sequences of length $n \geq 1$ with $n-1$ layers is $\binom{n+1}{3}$.
\end{proposition}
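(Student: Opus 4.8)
The plan is to set up a direct bijection between the pop stack sortable inversion sequences of length $n$ with exactly $n-1$ layers and some easily counted combinatorial family of size $\binom{n+1}{3}$ — the natural candidate being the set of triples $1 \le a \le b \le c \le n-1$, or equivalently three-element multisubsets of $\{1,\dots,n-1\}$, or lattice points, whichever packages the data most cleanly. The key structural observation to exploit is that the unique sequence with $n$ layers is $012\cdots(n-1)$ (each entry forms its own layer, so every ascent is strict and every layer has length one), and that dropping to $n-1$ layers means \emph{exactly one} layer has length $2$ and all the others have length $1$. So I would first argue that an inversion sequence $e=e_1\cdots e_n$ in $\I(120,201,1010)$ has $n-1$ layers if and only if there is a single index $j$ with $e_j = e_{j+1}$ (a weak, non-strict ascent, i.e. an equality since the sequence must be weakly increasing across a non-break) and $e_i < e_{i+1}$ strictly for all other $i$; combined with the layered conditions this forces the underlying word to be weakly increasing overall.

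Next I would translate the inversion-sequence constraints $0 \le e_i \le i-1$ into a count. Because all ascents except one are strict and the lone repeat is at position $j$, the sequence is determined by: the location $j$ of the repeated pair, and the values. Writing $e_1 = 0$ forced (since $e_1 = 0$ always) and tracking how much "slack" is available, the values form a strictly increasing sequence on $n-1$ distinct levels chosen with the ceiling constraints $e_i \le i-1$. I expect this to reduce, after a standard change of variables (subtracting a staircase), to choosing a weakly increasing function or equivalently choosing where the "jumps of size $\ge 2$" occur — and the whole data set will biject with choosing $3$ objects (the position $j$ of the flat step, plus two more parameters recording the value profile) from $n-1$ or $n+1$ slots, giving $\binom{n+1}{3}$. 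A clean way to see the final count is to recognize that these sequences are exactly the weakly increasing inversion sequences with exactly one repeated consecutive value, and to set up the bijection with the tetrahedral family A000292 whose $n$-th term is $\binom{n+1}{3}=1+3+6+\cdots$, the partial sums of triangular numbers; the "$+$triangular number" structure should fall out of summing over the choice of $j$.

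The main obstacle I anticipate is handling the interaction between the three layered conditions and the inversion-sequence ceiling $e_i \le i-1$ carefully enough to get the count exactly right rather than off by a shift. In particular one must be sure that once we fix which consecutive pair is equal and demand strict increase everywhere else, every resulting weakly increasing word that respects $e_i \le i-1$ genuinely has its layers be maximal (so that it has precisely $n-1$ layers and not fewer) — maximality of the layers is what rules out, e.g., two separate flat steps collapsing, and it is also what forbids certain value choices near the repeat. So the bookkeeping step — enumerating, for each admissible position $j$ of the flat step, the number of value sequences $0=e_1<e_2<\cdots=e_j=e_{j+1}<\cdots$ with $e_i\le i-1$ — is where I would be most careful; I expect it to yield $\binom{j}{2}$ or a similar binomial for each $j$, and summing $\sum_j \binom{j}{2}$ (a hockey-stick identity) to produce $\binom{n+1}{3}$. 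Everything else — the characterization of $n-1$ layers, the reduction to weakly increasing words — should be routine given Theorem~\ref{T:popwords} and the preceding discussion of layers.
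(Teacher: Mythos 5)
There is a genuine gap, and it is in the very first step of your plan: the claim that having $n-1$ layers forces the sequence to be weakly increasing with exactly one repeated consecutive value $e_j=e_{j+1}$. Layers are maximal \emph{weakly decreasing} runs, so the unique layer of length two need not be a pair of equal entries; it can be a strict descent $e_j>e_{j+1}$, provided the smaller entry is still at least the maximum of the previous layer. The smallest counterexample is $e=010$ for $n=3$: it lies in $\I(120,201,1010)$, has the two layers $0$ and $10$, but is not weakly increasing and has no repeated consecutive value. Your proposed family (weakly increasing inversion sequences with a single flat step) contains only $001,002,011$ for $n=3$, i.e.\ $3=\binom{3}{2}$ sequences, whereas the proposition requires $\binom{4}{3}=4$. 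In general your characterization counts only $\binom{n}{2}$ of the sequences; the missing ones are exactly those where, writing the forced staircase prefix $e_m=m-1$ for $m$ before the doubled layer, the second entry of that layer equals $i-3$ rather than $i-2$ (a strict descent inside the layer, still legal because it matches the top of the preceding layer). The paper's proof treats these two cases separately, getting $\binom{n}{2}$ for the equality case and $\binom{n}{3}$ for the descent case, and Pascal's identity gives $\binom{n+1}{3}$.

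A secondary issue: even within your framework, the hoped-for bookkeeping ``$\binom{j}{2}$ choices for each position $j$ of the flat step, then hockey stick'' does not match the actual structure. For instance with $n=3$ the number of valid completions is $2$ for each admissible position of the length-two layer, not $1$ and $3$; the correct count does not decompose over the position of the repeat as a sum of triangular numbers. To repair the argument you would need to (i) drop the weak-increase claim, (ii) show that the prefix before the doubled layer is the staircase $0,1,\ldots,i-2$ and that the second entry of that layer is $i-2$ or $i-3$ (the latter ruled in by the layered condition, not ruled out by $120$-avoidance as you might hope), and (iii) count the strictly increasing tail subject to $e_j\le j-1$ in each of the two cases — which is essentially the paper's proof.
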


\begin{proof}  A pop stack sortable inversion sequence $e$ of length $n \geq 1$ with $n-1$ layers will have exactly one entry $e_i$ as the second entry of a layer where  $i \in \{2,3,4,\ldots,n\}$.  It must be the case that $e_m =m-1$ for all $m<i$ for those entries to have all started new layers.  Note that $e_i \geq e_{i-2}$ as otherwise $e_{i-2}, e_{i-1}, e_i$ would form a forbidden $120$ pattern in $e$.  This means $e_i = e_{i-1} = i-2$ or $e_i=e_{i-2}=i-3$.  Further, we know $e_i, e_{i+1},e_{i+2},\ldots,e_n$ are in strictly increasing order as they are all in different layers.

In the case that $e_i = i-2$, since we must have $i-1 \leq e_{i+1} < e_{i+2} < \cdots < e_n \leq n-1$, we can select up to one of these later entries to be two greater than the previous entry.  Consider positions $\{2,3,4,\ldots,n,n+1\}$ where we pick two positions; the first to be the position of the second entry in the layer of length two and the second to be the position of the entry that is two more (as opposed to one more) than the previous.  In the case that $n+1$ is one of the selected options, then we do not have any entry that is two more than the one before it.  Hence there are $\binom{n}{2}$ such inversion sequences.

Otherwise, $e_i = i-3$ and so $i \geq 3$.  Now because $i-2 \leq e_{i+1} < e_{i+2} < \cdots < e_n \leq n-1$, up to two of these later entries could be two greater than the previous entry or one of these later entries could be three greater than the previous entry.  For the cases where no entry is three more than the previous entry, we have $\binom{n-2}{1} + \binom{n-2}{2} + \binom{n-2}{3}$ ways to select the second entry of the length two layer and the entries that are two greater than the entry they follow.  In the case where we have one entry that is three more than the previous entry, there are $\binom{n-2}{2}$ ways to select this entry and the entry that was the second in the longest layer.   This gives us a total of $\binom{n-2}{1} + 2\binom{n-2}{2} + \binom{n-2}{3} =\binom{n}{3}$ via a generalization of Pascal's Identity.

By another application of Pascal's Identity, there are $\binom{n}{2} + \binom{n}{3} = \binom{n+1}{3}$ pop stack sortable inversion sequences of length $n \geq 1$ with $n-1$ layers.
\end{proof}

One last special case is when the sortable inversion sequence contains only $0$s and $1$s and thus only needs to avoid $1010$, thereby having at most three layers.  The number of such sequences is the number of compositions of $n$ into at most four parts because we must begin with a $0$ and can only alternate values at most three times after that.  These numbers (with an offset of one) are better known as the ``Cake numbers"~\cite{YY}, sequence number A000125 in OEIS~\cite{OEIS}, which count the maximum number of slices of a cylindrical cake one can have after making $n-1$ planar cuts perpendicular to the top circular surface.

\begin{proposition}
The number of pop stack sortable inversion sequences of length $n$ only containing $0,1$ is the $(n-1)$st Cake number: $\binom{n-1}{0}+\binom{n-1}{1}+\binom{n-1}{2}+\binom{n-1}{3}$.
\end{proposition}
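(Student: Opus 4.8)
The plan is to reduce pop-stack-sortability to avoidance of a single pattern and then count directly according to the number of maximal runs. First I would observe that any word over the alphabet $\{0,1\}$ automatically avoids both $120$ and $201$, since each of these patterns involves three distinct values. Hence, by the characterization of the pop stack sortable inversion sequences as $\I(120,201,1010)$ (the corollary to Theorem~\ref{T:popwords}), an inversion sequence using only the values $0$ and $1$ is pop stack sortable precisely when it avoids $1010$. Also recall that every inversion sequence has $e_1 = 0$, and conversely every binary word of length $n$ beginning with $0$ is a valid inversion sequence, so the objects to be counted are exactly the $1010$-avoiding binary words of length $n$ that start with $0$.

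Next I would translate ``$1010$-avoiding'' into a statement about run structure. Write $e = \tau_1 \tau_2 \cdots \tau_k$ as a concatenation of maximal constant runs; since $e_1 = 0$, the run values alternate as $0, 1, 0, 1, \ldots$, beginning with $0$. The key claim is that $e$ contains $1010$ if and only if $k \ge 5$. For the ``if'' direction, when $k \ge 5$ the runs $\tau_2, \tau_3, \tau_4, \tau_5$ have values $1, 0, 1, 0$, so selecting one entry from each yields a $1010$ pattern. For the ``only if'' direction, an occurrence of $1010$ uses four entries lying in four runs whose values read $1, 0, 1, 0$ in order; because the run values alternate with $\tau_1$ having value $0$, the run containing the first selected entry has even index $\ge 2$, whence the runs of the subsequent entries have strictly larger indices of the prescribed parities, namely $\ge 3$, $\ge 4$, and $\ge 5$, forcing $k \ge 5$. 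Consequently the pop stack sortable binary inversion sequences of length $n$ are exactly the binary words of length $n$ that begin with $0$ and have at most four maximal runs.

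Finally I would count these by the exact number of runs $k \in \{1,2,3,4\}$. A word of length $n$ with exactly $k$ maximal runs whose first run has value $0$ is determined by its tuple of run lengths, which is a composition of $n$ into $k$ positive parts, and there are $\binom{n-1}{k-1}$ such compositions. Summing over $k = 1, 2, 3, 4$ gives $\binom{n-1}{0} + \binom{n-1}{1} + \binom{n-1}{2} + \binom{n-1}{3}$, the $(n-1)$st Cake number, as claimed. The argument is largely bookkeeping, and I do not anticipate a genuine obstacle; the one place that needs care is the alternation argument in the second paragraph, where one must track that an occurrence of $1010$ necessarily falls in four runs of strictly increasing index with the correct parities, thereby forcing at least five runs (equivalently, that having at most four runs precludes $1010$).
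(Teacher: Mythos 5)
Your proposal is correct and follows essentially the same route as the paper: the paper also observes that over the alphabet $\{0,1\}$ only $1010$ needs to be avoided, that such a sequence must begin with $0$ and can alternate values at most three times, and then counts compositions of $n$ into at most four parts. Your write-up just makes the run/parity argument explicit where the paper leaves it as a one-line remark.
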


We note that Cerbai~\citeyear{Cerbai} also considered a \emph{tortoise pop stack} for sorting Cayley words that had an additional pattern restriction of not allowing a $00$ pattern to appear in the pop stack.  (Restricting the content of stacks in terms of pattern avoidance was initially considered in~\cite{AMR, RS, CCF}.)  Cerbai's characterization of sortable Cayley words can be easily extended to all words, so we omit the proof here.

\begin{proposition}
The words (and thus inversion sequences) that are tortoise pop stack sortable are those which avoid $120, 201, 110,$ and $100$.
\end{proposition}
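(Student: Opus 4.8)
The plan is to mimic the structure of the proof of Theorem~\ref{T:popwords}, adjusting the data structure to the tortoise pop stack's extra constraint that no two equal entries may ever sit together in the pop stack. First I would verify the easy direction: each of the four patterns $120, 201, 110, 100$ is not tortoise pop stack sortable. The patterns $120$ and $201$ already fail for the ordinary pop stack, so they fail here too. For $110$ and $100$ I would argue that the two equal leading entries cannot both reside in the stack simultaneously (by the tortoise constraint), so the first of them is popped to the output before the trailing smaller entry (the $0$) ever enters, creating an inversion. Thus any word containing one of these four patterns is not sortable, giving necessity.

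For sufficiency I would take a word $w$ that is not tortoise pop stack sortable and produce one of the four forbidden patterns in $w$. The image $PS_{\mathrm{tort}}(w)$ contains an inversion $(y,x)$ with $y>x$ and $y$ output before $x$. There are two distinct mechanisms that can force $y$ out of the stack prematurely, corresponding to the two conditions under which the algorithm must pop: (i) the incoming entry $d$ is strictly larger than the current top $b$ — this is the pop stack's ordinary trigger and runs exactly as in the proof of Theorem~\ref{T:popwords}, producing a $120$ or $201$ pattern (the residual $1010$ case there now also can't occur or, if it does, the argument needs care — see below); (ii) the incoming entry equals the current top, i.e. $d = b$ in $w$, which is the \emph{new} trigger specific to the tortoise constraint. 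In case (ii), since $x$ is still in the input when the stack is dumped, and $x < y \le b = d$, the subword formed by $y$ (inside the stack), then the repeated pair $b, d$ with $b = d$, then $x$, exhibits the relative order of $2,1,1,0$ — but I must instead focus on just the relevant three entries: if $y = b$ then $b, d, x$ with $b = d > x$ is a $110$ pattern; if $y > b$ then I have $y$, then $b$, then $x$ with $y > b$ and $y > x$, and depending on whether $b \ge x$, $b < x$, or $b = x$ I get a $100$, $201$, or again a pattern reducing to $100$. I would enumerate these few subcases explicitly and check that in every one, a copy of $120$, $201$, $110$, or $100$ appears.

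The main obstacle I anticipate is bookkeeping the case analysis cleanly: unlike the ordinary pop stack, here there are two separate reasons the stack gets dumped, and within the "equal entry" trigger one must be careful about whether the entry $y$ stuck in the stack is the same value as the top $b$, and whether $x$ is smaller than, equal to, or larger than the intermediate values — each comparison spawning a short list of order patterns that must each be checked to contain one of the four basis elements. A secondary subtlety is that the residual $1010$ pattern which survived in the proof of Theorem~\ref{T:popwords} must here be shown to actually be unsortable by the tortoise stack (it is: the two $1$'s cannot coexist in the stack), which is consistent with $1010$ not needing to appear in the basis because it already contains $100$ as a pattern ($1,0,0$ from positions $1,2,4$)? — wait, $1010$ read at positions $1,2,4$ is $1,0,0$, indeed a $100$ pattern, so $1010$ is automatically excluded and needs no separate mention. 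I would close by remarking, as the statement does, that since inversion sequences are words the same basis applies verbatim, and note that this is Cerbai's characterization of tortoise-pop-stack-sortable Cayley words extended to arbitrary words with the identical proof, which is why the full details are omitted in the paper.
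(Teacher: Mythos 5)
First, note that the paper does not actually prove this proposition: it defers to Cerbai's argument for Cayley words and explicitly omits the proof. So your plan of adapting the proof of Theorem~\ref{T:popwords} with the extra pop trigger (an incoming entry equal to the top of the stack) is exactly the right way to supply the missing argument, and your observations that the trigger for a pop is now $d\geq b$ rather than $d>b$, and that the residual $1010$ case from the plain pop stack proof is absorbed because $1010$ contains $100$ (positions $1,2,4$), are both correct and are precisely why $1010$ disappears from the basis.

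There are two concrete spots where your case analysis as written does not land. In the necessity direction, your sentence about ``the two equal leading entries'' covers $110$ but not $100$, where the equal entries are the trailing ones; there the point is that the $1$ cannot still be in the stack when the second $0$ arrives, since that would require the first $0$ to also still be present. More seriously, in the new trigger's subcase $y>b=d$ with $x<b$, the triple $y,b,x$ you propose to read off is a $210$ pattern, and $210$ is tortoise sortable (push $2,1,0$, pop once), so it is not in the basis and proves nothing; likewise your claim that ``$b\geq x$ gives $100$'' is only correct when $b=x$. The fix is immediate and already implicit in your own four-letter subword $y\,b\,d\,x$ of type $2110$: use the equal pair $b,d$ together with the later smaller $x$ to extract a $110$. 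With that substitution every subcase of your enumeration does produce one of $120$, $201$, $110$, $100$, and the proof goes through.
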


Given this classification, the generating function for the enumeration of tortoise pop stack sortable inversion sequences is given in Theorem 12 in recent work by Callan and Mansour~\citeyear{CM23}.

\subsection{Restricting pop stack depth}

When extending the notion of the depth of a stack to words, we could continue to use the definition in terms of the number of elements allowed in the stack as introduced by Elder~\citeyear{elder:permutations-ge:}.  This could be more practical in some applications.  However, given the repetition that distinguishes words from permutations and considering the storage in terms of distinct elements to remember, it also makes sense to consider another definition that is still consistent with the original definition when applied to permutations.  The content of the stack could be stored as a limited number (the depth) of lists of positions in the stack where these positions are counted from the bottom.  We consider the latter notion as given in the following definition.

\begin{definition} The \emph{depth} of a stack (or pop stack) is the number of entries with distinct values allowed in the stack (or pop stack) at any stage.
\end{definition}

For example, a stack of depth three under this definition would still only allow three entries of a permutation to be in the stack at one time.  However, this same restricted stack could allow arbitrarily many entries of a word in the stack, provided that no more than three distinct letters were represented at any time.  Using this definition, we begin by characterizing words and specifically inversion sequences sortable by pop stacks of limited depth.

\begin{proposition}  A word $w$ is sortable by a pop stack of depth $k$ if and only if $w$ avoids $120,201,1010$ and also $k(k-1)\cdots10$.
\end{proposition}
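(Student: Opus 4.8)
The plan is to show that sortability by a pop stack of depth $k$ is equivalent to being pop stack sortable (in the unrestricted sense) together with never needing more than $k$ distinct values in the stack, and then translate this second condition into the single pattern $k(k-1)\cdots 10$. For the ``only if'' direction, suppose $w$ is sortable by a depth-$k$ pop stack. A depth-$k$ pop stack is in particular a pop stack, so $w$ must avoid $120, 201, 1010$ by Theorem~\ref{T:popwords} (more precisely by the layered-words characterization). Moreover, observe that in the optimal pop stack algorithm (Algorithm~\ref{A:PStack}) the stack at any stage holds exactly a maximal weakly descending run being built from the input, i.e.\ a contiguous weakly decreasing block of $w$; if $w$ contains $k(k-1)\cdots10$, say as entries $w_{\alpha_1}>w_{\alpha_2}>\cdots>w_{\alpha_{k+1}}$, then these $k+1$ strictly decreasing values must all sit in the pop stack simultaneously at the moment $w_{\alpha_{k+1}}$ is pushed — here I would argue that no pop can occur between $w_{\alpha_1}$ and $w_{\alpha_{k+1}}$ without destroying sortability (a pop there would output $w_{\alpha_1}$ before $w_{\alpha_{k+1}}$, creating an inversion, since after the pop the smaller entry $w_{\alpha_{k+1}}$ still enters later). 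That forces $k+1$ distinct values into the pop stack at once, contradicting depth $k$. Hence $w$ avoids $k(k-1)\cdots10$ as well.

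For the ``if'' direction, suppose $w$ avoids $120,201,1010$ and $k(k-1)\cdots10$. By the layered-words proposition, $w$ is layered, say $w=\tau_1\tau_2\cdots\tau_m$ with each $\tau_i$ a maximal weakly decreasing run and the last entry of $\tau_{i+1}$ at least the first entry of $\tau_i$. Running the optimal pop stack algorithm on $w$, the stack content while processing $\tau_i$ is always a suffix of (the already-read part of) $\tau_i$ — a weakly decreasing block — and it is flushed completely at each strict ascent, which are exactly the boundaries between consecutive layers. So the sorting succeeds, and the maximum number of distinct values ever in the stack equals the maximum, over all layers $\tau_i$, of the number of distinct values appearing in $\tau_i$. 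A weakly decreasing block with $k+1$ distinct values is precisely an occurrence of $k(k-1)\cdots10$ as a pattern; since $w$ avoids that pattern, every layer has at most $k$ distinct values, so the depth-$k$ pop stack suffices. This gives sortability by a pop stack of depth $k$.

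The main obstacle — and the step I would write most carefully — is the claim in the ``only if'' direction that an occurrence of $k(k-1)\cdots10$ actually forces $k+1$ distinct values to coexist in the pop stack. One must rule out the possibility that the pop stack is flushed somewhere in the middle of the pattern and the values are ``recycled.'' The right way to handle this is to not argue about an arbitrary sorting but to use the fact that the optimal algorithm is optimal: if \emph{any} depth-$k$ procedure sorts $w$, then the greedy Algorithm~\ref{A:PStack} does too (inserting extra pops never helps, as noted in the paragraph preceding Section~2), and for the greedy algorithm the stack content is literally a contiguous weakly decreasing factor of $w$, so a strictly decreasing subsequence of length $k+1$ lying inside such a factor at one instant is unavoidable once we know no strict ascent separates its entries. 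The remaining point, that no strict ascent can separate the entries of the $k(k-1)\cdots10$ pattern in a layered (hence already sorted) word, is immediate from the layered structure: all entries of a strictly decreasing subsequence lie within a single layer. So in fact the cleanest route is: first establish $w$ is layered, then note the pattern lives in one layer, then count distinct values in that layer — which merges the two directions into one structural argument and sidesteps the ``recycling'' worry entirely.
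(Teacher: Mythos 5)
Your proof is correct and follows essentially the same route as the paper's, which merely sketches the two observations you elaborate: avoidance of $120,201,1010$ is forced by ordinary pop stack sortability, and an occurrence of $k(k-1)\cdots10$ is exactly a capacity violation, i.e.\ $k+1$ distinct values that must occupy the stack simultaneously. Your write-up (via the layered structure and the greedy algorithm) is in fact more detailed than the paper's two-sentence argument, and your closing ``cleanest route'' correctly handles the only subtle point, that the decreasing pattern cannot be split across pops in any successful sorting.
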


\begin{proof}  As before any word $w$ must avoid $120,201,1010$ to be pop stack sortable.  Additionally, a decreasing sequence that is too long to fit in a limited depth pop stack is also not sortable.

Conversely, if $w$ is not pop stack sortable by a pop stack of depth $k$ and avoids $120,201,1010$, then the problem is a result of the limitation of the pop stack's capacity.  Hence $w$ must contain the $k(k-1)\cdots10$ pattern.
\end{proof}

\begin{corollary}~\label{C:popstackD} Inversion sequences sortable by a pop stack of depth $k$ are exactly those in  \\ $\I(120,201,1010, k(k-1)\cdots10)$.
\end{corollary}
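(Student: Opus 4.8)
The plan is to deduce Corollary~\ref{C:popstackD} directly from the preceding Proposition, exactly as the analogous Corollary to Theorem~\ref{T:popwords} was deduced for ordinary pop stacks. The key observation, already invoked twice earlier in the paper, is that inversion sequences are merely words subject to the additional constraints $0 \le e_i \le i-1$, and that the sorting machines under consideration gain no advantage from extra entries; hence the sortable inversion sequences form a pattern class whose basis is inherited from the basis for sortable words.

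Concretely, I would argue as follows. First, recall from the Proposition immediately above that a word $w$ is sortable by a pop stack of depth $k$ if and only if $w$ avoids $120$, $201$, $1010$, and $k(k-1)\cdots 10$. An inversion sequence $e \in \I$ is in particular a word, and the pop-stack-of-depth-$k$ sorting algorithm applied to $e$ behaves identically whether we regard $e$ as a general word or as an inversion sequence. Therefore $e$ is sortable by a pop stack of depth $k$ if and only if, viewed as a word, it avoids those four patterns; that is, if and only if $e \in \I(120,201,1010,k(k-1)\cdots 10)$. This is precisely the claimed characterization.

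Since the Proposition is assumed, essentially the only thing to check is that pattern containment for inversion sequences is the same notion as pattern containment for words — which is true by the definition given in the excerpt (containment and avoidance for words, and a fortiori for inversion sequences, already allow equal entries) — and that restricting the ambient set from all words to inversion sequences does not change which of a given inversion sequence's subsequences form a forbidden pattern. Both are immediate. I expect no real obstacle here: the corollary is a one-line specialization of the Proposition, and the proof consists of observing that the sortability criterion for words restricts verbatim to inversion sequences. The only subtlety worth a sentence is the remark (made earlier in the paper) that natural stack/pop-stack machines never benefit from inserting extra entries, so the set of sortable inversion sequences is genuinely a downset under containment and thus is honestly described by a basis of forbidden patterns rather than merely by a necessary-and-sufficient list that happens to coincide.
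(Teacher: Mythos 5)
Your proposal is correct and matches the paper's treatment: the corollary is stated there as an immediate specialization of the preceding proposition on words, justified exactly by the observation that inversion sequences are words with extra restrictions and that pattern avoidance restricts verbatim. Nothing further is needed.
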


In the case of depth one, the number of sortable inversion sequences is the number of weakly increasing inversion sequences which are known to be enumerated by the Catalan numbers due to Martinez and Savage~\citeyear{MS2018}.  The proof shows these inversion sequences to correspond to the $213$ avoiding permutations, which were enumerated by Simion and Schmidt~\citeyear{simion:restricted-perm:}.

\begin{theorem}  (Martinez and Savage) The number of inversion sequences of length $n$ which avoid $10$ and thus are sortable by a pop stack of depth $1$ is the $n$th Catalan number $C_n =\frac{\binom{2n}{n}}{n+1}$.
\end{theorem}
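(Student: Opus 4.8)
The plan is to reduce the claim to a known enumeration by exhibiting a bijection between length-$n$ inversion sequences avoiding $10$ and a Catalan family. First I would observe that an inversion sequence $e = e_1 e_2 \cdots e_n$ avoids the pattern $10$ precisely when it has no strict descent, i.e. $0 = e_1 \le e_2 \le \cdots \le e_n$ with the constraint $e_i \le i-1$ still in force; equivalently these are exactly the weakly increasing inversion sequences. So the task is to count weakly increasing sequences of nonnegative integers satisfying $e_i \le i-1$.

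Next I would set up the correspondence with $213$-avoiding permutations used by Martinez and Savage~\citeyear{MS2018}. Concretely, one sends a $213$-avoiding permutation $\pi$ to its inversion sequence $e(\pi)$ via the Lehmer-type code defined in the introduction, and checks (i) that $e(\pi)$ is weakly increasing whenever $\pi$ avoids $213$, and (ii) that conversely every weakly increasing inversion sequence arises this way from a unique $213$-avoiding permutation. The key structural fact is that in a $213$-avoiding permutation, reading left to right, each new entry either is a new left-to-right minimum (contributing nothing new) or lies above all entries to its right that are smaller — forcing the count of inversions with $\pi_j$ as the smaller entry to be monotone non-decreasing in $j$. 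Once this bijection is in place, the enumeration $|\I_n(10)| = C_n$ follows immediately from the classical result of Simion and Schmidt~\citeyear{simion:restricted-perm:} that $|\Av_n(213)| = C_n$.

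Alternatively — and this is probably the cleaner route to write down — I would count the weakly increasing inversion sequences directly by a ballot-style argument: a weakly increasing sequence $0 = e_1 \le e_2 \le \cdots \le e_n$ with $e_i \le i-1$ is exactly a lattice path / staircase tableau object counted by $C_n$, which one can see by the substitution $f_i = e_i$ and comparing with the standard description of Catalan objects as sequences $0 \le f_1 \le f_2 \le \cdots \le f_{n-1}$ with $f_i \le i$ (reindexing $e_{i+1}$ as $f_i$), a well-known Catalan family. One then invokes the Martinez–Savage identification to tie the combinatorial count back to pattern avoidance if that framing is desired.

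The main obstacle I expect is not the enumeration itself but verifying the two directions of the bijection with $213$-avoiders carefully: in particular, proving that monotonicity of the inversion-count sequence characterizes $213$-avoidance (not merely is implied by it) requires a short but slightly delicate argument about where a would-be $213$ pattern $\pi_a \pi_b \pi_c$ with $\pi_b < \pi_a < \pi_c$ would force a strict descent in the code at position corresponding to $\pi_b$ versus $\pi_a$. Since the statement is explicitly attributed to Martinez and Savage, I would likely cite their proof for this equivalence rather than reprove it, and simply record that the $10$-avoiding inversion sequences coincide with the weakly increasing ones and hence with the image of $\Av(213)$ under $e(\cdot)$, giving the Catalan count.
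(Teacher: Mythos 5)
Your proposal is correct. Note that the paper does not actually prove this statement: the theorem is quoted from Martinez and Savage with only the remark that their proof identifies the weakly increasing inversion sequences with the $213$-avoiding permutations, so your first route (reduce $10$-avoidance to weak monotonicity of the sequence, then cite the Martinez--Savage correspondence and Simion--Schmidt) is exactly the paper's framing. Your alternative direct count is also valid and is arguably the cleaner self-contained argument: since $e_1=0$ is forced, setting $f_i=e_{i+1}$ turns a weakly increasing inversion sequence of length $n$ into a sequence $0\le f_1\le\cdots\le f_{n-1}$ with $f_i\le i$, a standard Catalan family, which sidesteps the only delicate point you flag, namely verifying both directions of the bijection with $\Av(213)$.
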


For a pop stack of depth two, the sortable inversion sequences would need to avoid $120,201,210, 1010$.  The first few terms of the enumeration sequence are: $1,1,2,6,23,100,471,2349$.  We define the succession rules for the generating tree for these inversion sequences (using the algorithm introduced in \cite{MY}) in Section~\ref{S:ps_d2}.  However, it is hard to get an explicit formula by translating this generating tree to a system of equations and it is even more difficult for larger values of $k$.

Another approach is to count the inversion sequences recursively directly, but this too is not as nice as we would hope.  One recursive characterization is given below.

A weakly decreasing word of length $n$ has depth $k$ if it has the form $a_1\ldots a_1a_2\ldots a_2\ldots a_{k+1}\ldots a_{k+1}$ where for each $i$, $a_i > a_{i + 1}\ge 0$ (so there are $k$ descents). Let $WD_k(n,a)$ be the set of all such words whose first value is $a$ and whose depth is at most $k$.

\begin{lemma} For all $n,k\in\mathbb{Z}^+$ with $k<n$, we have
\begin{equation}
|WD_k(n,a)| = 1 + \sum_{j=1}^{k}\binom{n-1}{j}\binom{a}{j}.
\end{equation}
\end{lemma}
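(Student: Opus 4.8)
The plan is to count the words in $WD_k(n,a)$ by the number of descents they actually use, and to see such a word as a choice of descent positions together with a choice of descent values. A word $w \in WD_k(n,a)$ of depth exactly $j$ (for $0 \le j \le k$) is a weakly decreasing word of length $n$ with first letter $a$ taking exactly $j+1$ distinct values $a = a_1 > a_2 > \cdots > a_{j+1} \ge 0$. Such a word is determined by two independent pieces of data: (i) the multiset of block sizes, equivalently the $j$ positions in $\{2,\dots,n\}$ at which a strict descent occurs, giving $\binom{n-1}{j}$ choices; and (ii) the decreasing sequence of values strictly below $a$ that the later blocks take, i.e. a $j$-element subset of $\{0,1,\dots,a-1\}$, giving $\binom{a}{j}$ choices. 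Hence there are $\binom{n-1}{j}\binom{a}{j}$ words of depth exactly $j$.

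The first step is to make the bijection in the previous paragraph precise: given $w = a^{m_1} a_2^{m_2} \cdots a_{j+1}^{m_{j+1}}$ with all $m_i \ge 1$ and $\sum m_i = n$, record the partial sums $m_1, m_1+m_2, \dots, m_1+\cdots+m_j$ as the descent positions (these are $j$ distinct elements of $\{1,\dots,n-1\}$, equivalently $j$ distinct elements of $\{2,\dots,n\}$ read as ``the index where the new block starts''), and record $\{a_2,\dots,a_{j+1}\}$ as the value set; conversely any such pair reconstructs $w$ uniquely. One must check that the constraint is exactly $a_1 = a$ fixed and the other values forming a $j$-subset of $\{0,\dots,a-1\}$, which forces $a \ge j$ for the count to be nonzero — consistent with $\binom{a}{j} = 0$ when $j > a$. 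The second step is simply to sum over $j$: since $k < n$ every depth $0 \le j \le k$ is attainable lengthwise, and
\begin{equation}
|WD_k(n,a)| = \sum_{j=0}^{k}\binom{n-1}{j}\binom{a}{j} = 1 + \sum_{j=1}^{k}\binom{n-1}{j}\binom{a}{j},
\end{equation}
where the $j=0$ term contributes the lone weakly constant word $a a \cdots a$, accounting for the leading $1$.

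I do not expect a serious obstacle here; the only points requiring care are bookkeeping ones. First, one should confirm the two choices are genuinely independent — the block-size data lives in the positions $\{1,\dots,n-1\}$ and the value data lives in $\{0,\dots,a-1\}$, with no interaction — so the product $\binom{n-1}{j}\binom{a}{j}$ is correct for each $j$. Second, one should note that the hypothesis $k < n$ guarantees a weakly decreasing word of length $n$ can have as many as $k$ descents, so no depth value in the range is spuriously excluded; if instead $k \ge n$ the top terms would still vanish because $\binom{n-1}{j}=0$ for $j \ge n$, so the formula would remain valid, but the stated hypothesis lets us avoid discussing that. With these checks the identity follows immediately.
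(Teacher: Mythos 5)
Your proof is correct and is essentially the argument the paper gives: stratify by the exact number $j$ of descents, choose the $j$ descent locations ($\binom{n-1}{j}$ ways) and the $j$ values below $a$ ($\binom{a}{j}$ ways), with the constant word $aa\cdots a$ supplying the leading $1$. The paper states this in one sentence; you have simply spelled out the same bijection and the vanishing-binomial conventions in more detail.
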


Note that we assume $\binom{n}{k}=0$ if $k>n$. The result follows from choosing $j$ locations for a descent, then $j$ values for the weakly decreasing sequence, and one for the sequence with no descents.

We say a word $w$ of length $n$ is a layered weakly decreasing word of depth at most $k$ if $w$ can be split into a sequence of subwords (in the factor sense) $w_1,w_2,...$ where each $w_i \in WD_k(m_i,a_i)$ with $\sum_i m_i=n$, and for each $i$, $a_i\ge 0$, and $min(w_{i + 1})\ge max(w_i)$ (i.e. the largest (first) value of each word is bounded by the smallest value of the succeeding word). Further, we consider the words of this type which are also inversion sequences, i.e. $max(w_i)\le \sum_{j = 1}^{i - 1}m_j$. Let $LDI_k(n)$ be the set of all such layered depth at most $k$ inversion sequences of length $n$.

\begin{lemma} The set of sortable inversion sequences by a depth $k$ pop stack is exactly $LDI_k(n)$.
\end{lemma}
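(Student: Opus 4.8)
The plan is to prove the two set equalities by double containment, leveraging Corollary~\ref{C:popstackD}, which already characterizes the sortable inversion sequences as $\I(120,201,1010,k(k-1)\cdots 10)$. Thus it suffices to show that an inversion sequence $w$ of length $n$ lies in $LDI_k(n)$ if and only if $w$ avoids $120$, $201$, $1010$, and the decreasing pattern $k(k-1)\cdots 0$ of length $k+1$.

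First I would show $LDI_k(n)\subseteq\I(120,201,1010,k(k-1)\cdots 10)$. Suppose $w\in LDI_k(n)$, so $w=w_1w_2\cdots$ with each $w_i\in WD_k(m_i,a_i)$, the $w_i$ weakly decreasing, $\min(w_{i+1})\ge\max(w_i)$, and $\max(w_i)\le\sum_{j<i}m_j$. Each $w_i$ has at most $k$ descents (depth at most $k$), so each $w_i$ individually contains no strictly decreasing subsequence of length $k+1$; combined with the condition $\min(w_{i+1})\ge\max(w_i)$, any strictly decreasing subsequence of $w$ must live inside a single block $w_i$, hence has length at most $k+1$ — actually at most the number of distinct values in $w_i$, which is at most $k+1$, so $w$ avoids $k(k-1)\cdots 0$. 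For the three shorter patterns, I would argue exactly as in the ``layered words'' Proposition earlier in the paper: the block structure with $\min(w_{i+1})\ge\max(w_i)$ and each $w_i$ weakly decreasing means $w$ is a layered word in the sense of the earlier Definition (one only needs to merge consecutive blocks if the boundary is weakly decreasing, and refine each $w_i$ into its maximal weakly decreasing runs), and layered words avoid $120,201,1010$ by that Proposition. The one point requiring care is that the $LDI_k$ block decomposition is coarser and possibly different from the canonical layer decomposition, so I would phrase this as: a layered-weakly-decreasing word is in particular a layered word, which gives avoidance of the three patterns for free.

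Next I would show the reverse inclusion. Let $w$ be an inversion sequence avoiding $120,201,1010$, and $k(k-1)\cdots 0$. By the earlier Proposition, $w$ is a layered word: $w=\tau_1\tau_2\cdots\tau_\ell$ with each $\tau_i$ a maximal weakly decreasing run and $\min(\tau_{i+1})\ge\max(\tau_i)$. I would group consecutive layers greedily into blocks $w_1,w_2,\dots$ so that each block is as long as possible subject to having at most $k$ descents (equivalently at most $k+1$ distinct values). The key observation is that, because $w$ avoids $k(k-1)\cdots 0$, no single strictly decreasing chain of length $k+1$ exists in $w$; I need this to conclude that each maximal weakly decreasing run $\tau_i$ has at most $k$ descents, so that the greedy grouping is never forced to break in the middle of a $\tau_i$ — hence each block is itself weakly decreasing (concatenation of weakly decreasing runs with $\min(\tau_{i+1})\ge\max(\tau_i)$ and equality at the seams is what makes a block weakly decreasing; I'd need to check the greedy grouping only merges runs whose seam is weakly decreasing, which is automatic here since layered means $\max(\tau_i)\le\min(\tau_{i+1})$ — wait, that forces strict ascent at a seam unless $\max(\tau_i)=\min(\tau_{i+1})$). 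This is the subtle spot: within a single $WD_k$ block the first value must be $\ge$ the last, and distinct layers have a weak ascent between them, so a block spanning several layers $\tau_i\cdots\tau_{i+t}$ is weakly decreasing only if all those seam ascents are in fact equalities. So I would instead define each block to be a single maximal weakly decreasing run $\tau_i$, i.e. take the canonical layer decomposition itself; then each block trivially is in $WD_k(m_i,\max\tau_i)$ provided it has at most $k$ descents, the boundary condition $\min(w_{i+1})\ge\max(w_i)$ is exactly the layered condition, and the inversion-sequence bound $\max(\tau_i)\le\sum_{j<i}m_j$ holds because $w$ is an inversion sequence (its $t$-th entry is at most $t-1$, and the first entry of $\tau_i$ sits at position $1+\sum_{j<i}m_j$).

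The main obstacle, and the step I expect to spend the most care on, is precisely reconciling the definition of $LDI_k$ — which allows a block $w_i$ to contain several descents and to have its own internal structure, with first value merely bounded by the smallest value of the next block — against the rigid layered structure forced by avoiding $120,201,1010$, where non-adjacent layers have strict ascents between them. I would resolve this by observing that the finest decomposition (into maximal weakly decreasing runs) already witnesses membership in $LDI_k(n)$ once the depth-$k$ bound holds per run, and conversely that any $LDI_k$ decomposition can be refined into maximal weakly decreasing runs without increasing the depth of any piece, so the two descriptions coincide. After that, everything reduces to the pattern-avoidance facts already established, and the inversion-sequence inequality $\max(w_i)\le\sum_{j<i}m_j$ is just a restatement of $e_t\le t-1$, so no further computation is needed.
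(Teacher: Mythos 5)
Your route is genuinely different from the paper's: the paper proves the lemma in one stroke directly from the machine (each pop empties a weakly decreasing block with at most $k$ distinct values, all of which must be bounded above by every later entry of the input), whereas you reduce to Corollary~\ref{C:popstackD} and try to prove $LDI_k(n)=\I_n(120,201,1010,k(k-1)\cdots10)$ via the earlier proposition on layered words. Your reverse inclusion is sound: avoidance of the four patterns makes $w$ layered, avoidance of $k(k-1)\cdots10$ bounds the number of distinct values in each maximal weakly decreasing run, the layered condition gives $\min(w_{i+1})\ge\max(w_i)$, and the inversion-sequence inequality $e_t\le t-1$ gives $\max(\tau_i)\le\sum_{j<i}m_j$; taking the maximal runs themselves as the blocks, as you eventually settle on, is exactly right.

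The forward inclusion, however, has a genuine gap at its crux. As defined in the paper, a block in $WD_k(m_i,a_i)$ may have up to $k$ descents, i.e.\ up to $k+1$ distinct values, and hence may itself contain the strictly decreasing pattern $k(k-1)\cdots10$, which has exactly $k+1$ letters. Your sentence ``any strictly decreasing subsequence \dots has length at most $k+1$, so $w$ avoids $k(k-1)\cdots0$'' is a non sequitur: avoidance requires the bound $k$, not $k+1$. Taken literally, the inclusion even fails: $0110\in LDI_1(4)$ (blocks $0$ and $110$), yet $0110$ contains $10$ and is not sortable by a depth-one pop stack. What makes the lemma true is the reading used in the paper's own one-line proof --- a depth-$k$ pop stack pops blocks with at most $k$ \emph{distinct values}, i.e.\ at most $k-1$ descents --- so to complete your argument you must either adopt that convention for the blocks (after which your forward direction goes through, since you correctly show strictly decreasing subsequences cannot straddle two blocks) or explicitly flag and repair the off-by-one; as written, the key step fails.
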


The proof is simply to note that when the stack is popped, there are at most $k$ distinct values, and they must be less than any later sequence values.

Let $w_1w_2...w_t \in LDI_k(n)$, where $w_1,w_2,...,w_t$ are the subwords as defined above with each value of $w_{i+1}$ bounded below by the largest value in $w_i$.  Subtracting $a_i$ from every value in $w_{i+1}$ yields an element of $WD_k(m_i, a_{i+1}-a_i)$.

As a result we get the following formula.

\begin{proposition}  For all $n,k>0$,
\begin{equation}~\label{E1}
 |LDI_k(n)| = \sum_{\substack{m_1+m_2+...m_t=n,\\m_i\ge 0}} \quad \sum_{a_1,a_2,...a_t}^* |WD_k(m_i, a_i)|,
\end{equation}
 where $a_1=0$, and $\sum^*$ means for all $j > 1$ to sum on values of $a_j$ such that $\displaystyle{0\le a_j\le \bigg(\sum_{h = 1}^{j - 1}m_h - a_h\bigg) }$.
\end{proposition}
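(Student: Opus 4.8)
The plan is to realize the double sum as the output of a bijection between $LDI_k(n)$ and a set of labelled compositions: a composition $(m_1,\dots,m_t)$ of $n$, a vector of nonnegative ``jumps'' $(a_1,\dots,a_t)$ with $a_1=0$ satisfying the $\sum^{*}$ inequalities, and, for each $i$, a normalized block $\widehat w_i\in WD_k(m_i,a_i)$. The factor $|WD_k(m_i,a_i)|$ appearing in the summand counts the ways to choose $\widehat w_i$ (so the summand is read as the product $\prod_{i=1}^{t}|WD_k(m_i,a_i)|$), and its explicit value is supplied by the lemma established above.

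First I would set up the decomposition side. Given $w\in LDI_k(n)$, which by the results above equals $\I_n(120,201,1010,k(k-1)\cdots10)$, I take $w=w_1w_2\cdots w_t$ to be its decomposition into maximal weakly decreasing factors (its layers). By the layered-word characterization each layer is weakly decreasing with $\min(w_{i+1})\ge\max(w_i)$, and, by the avoidance of $k(k-1)\cdots10$, each layer has at most $k$ distinct values and hence lies in $WD_k(m_i,\cdot)$; moreover, avoidance of $120$ and $201$ forces the maxima $M_i:=\max(w_i)$ to be weakly increasing, with $M_1=0$ since $w$ is an inversion sequence and $w_1$ begins at position $1$. Setting $m_i=|w_i|$, $a_i=M_i-M_{i-1}$ (with $M_0=0$) and $\widehat w_i=w_i-M_{i-1}$ then gives $a_1=0$, $a_i\ge0$, and $\widehat w_i\in WD_k(m_i,a_i)$: it is weakly decreasing, its first (largest) value is $a_i$, all its values are $\ge0$ because $\min(w_i)\ge M_{i-1}$, and it inherits at most $k$ descents. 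The inverse map re-shifts block $i$ upward by $M_{i-1}=a_1+\cdots+a_{i-1}$ and concatenates.

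Next I would identify exactly which jump vectors occur. The only constraint imposed by ``$w$ is an inversion sequence'' that is not automatic is at the first position of each block $w_j$, where the value equals $M_j$ and the position index equals $1+\sum_{h<j}m_h$; thus the requirement is $M_j\le\sum_{h<j}m_h$ for all $j$. Using the telescoping identity $M_j=a_1+\cdots+a_j$, this becomes $a_j\le\sum_{h<j}m_h-\sum_{h\le j-1}a_h=\sum_{h<j}(m_h-a_h)$, which is precisely the condition encoded by $\sum^{*}$. Conversely, inside a block the entries only weakly decrease, so once the inequality holds at the first position of each block it holds at every position; hence the admissible jump vectors are exactly those in the index set. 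Summing $\prod_i|WD_k(m_i,a_i)|$ over all compositions of $n$ and all admissible jump vectors therefore counts $|LDI_k(n)|$, giving the stated identity.

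The step I expect to be the main obstacle is verifying that the correspondence is a genuine bijection — that the maximal-layer decomposition of the word produced by the inverse map is exactly the block structure fed in, so that no element of $LDI_k(n)$ is produced more than once or missed. This is where the constant blocks require care: a constant layer is never subdivided, and a ``jump of height $0$'' entering a layer only yields a true layer boundary when the preceding layer is non-constant, so one must confirm that the intended reading of the index set already excludes the spurious decompositions (for instance of the all-zeros sequence). Once that bookkeeping is settled, the remaining checks — weak monotonicity of the $M_i$, the descent count of $\widehat w_i$, and mutual inversion of the two maps — are routine.
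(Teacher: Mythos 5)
Your decomposition is exactly the one the paper intends --- its entire justification is the two sentences preceding the statement (subtract the running maximum $M_{i-1}$ from block $i$ and observe the result lies in $WD_k(m_i,a_i)$) --- and your derivation of the $\sum^{*}$ condition from the inversion-sequence inequality at the first position of each block is correct and considerably more careful than anything the paper writes down. However, the step you defer as ``bookkeeping'' is not routine; it is the crux, and as the index set is literally written the check fails. The correspondence you describe is a surjection from the index set onto $LDI_k(n)$ but is not injective: whenever $a_j=0$ for some $j\ge 2$ and the preceding normalized block $\widehat w_{j-1}$ is constant, the assembled word has no strict ascent at the boundary between blocks $j-1$ and $j$, so the same word also arises from the coarser decomposition in which those two blocks are merged. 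Concretely, take $n=2$, $k=1$: then $LDI_1(2)=\{00,01\}$ has two elements, but the right-hand side of \eqref{E1} (reading $m_i\ge 1$, as must be intended since $m_i\ge 0$ makes the outer sum infinite) evaluates to $|WD_1(2,0)|+|WD_1(1,0)|^2+|WD_1(1,0)|\,|WD_1(1,1)|=1+1+1=3$; the word $00$ is counted once as a single block and once as $0\,|\,0$.

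Your own analysis already contains the repair: two admissible decompositions of the same word can differ only across a constant run, so injectivity holds exactly on the subset of the index set where every internal boundary is a strict ascent of the assembled word, i.e.\ $\min(\widehat w_{j-1})<a_{j-1}+a_j$ for all $j\ge 2$. Since $\min(\widehat w_{j-1})\le a_{j-1}$, this is automatic when $a_j\ge 1$ and reduces, when $a_j=0$, to requiring $\widehat w_{j-1}$ to be non-constant. That condition couples the choice of $a_j$ to the choice of the block $\widehat w_{j-1}$, so the summand no longer factors as $\prod_i|WD_k(m_i,a_i)|$; one must either split $WD_k(m,a)$ into its constant and non-constant parts, or read the identity as counting words equipped with a distinguished admissible decomposition rather than elements of $LDI_k(n)$. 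You were right to single this point out, but flagging it is not the same as settling it, and settled honestly it shows the stated index set needs amending.
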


Since the first word in an element of $LDI_k(n)$ must be a string of zeros, we let $LDI_k(n,m_1)$ denote the elements of $LDI_k(n)$ that begin with $m_1$ zeros followed by a non-zero value. Further, let $LDI_k(n, m_1, a_2)$ be the subset of these layered decreasing inversion sequences which begin with $m_1$ zeros, and have first non-zero value $a_2$ (and $a_2 \le m_1$). We now decompose our sequence of words as $\{\overline{0}_{m_1},w_2,\hat{w}\}$ where $\hat{w} = \{w_3,...w_t\}$. Now $w_2 \in WD_k(m_2, a_2)$ for some $0\le m_2 \le n-m_1$, $a_2>0$. If we subtract $a_2$ from all the elements in $\hat{w}$ the result is in $LDI(n - m_2 - m_1, m_1 + m_2 - a_2, a)$  for some $a$ with $a\le m_1 + m_2 - a_2$. Note that the number of starting zeros guarantees that adding $a_2$ to all of these entries will still yield an element of $LDI_k(n)$.

Hence we get the following formula.

\begin{proposition} For all $n,k,m_1\in\mathbb{Z}^+$ with $m_1<n$,
\begin{equation}~\label{E2} |LDI_k(n,m_1)| = \sum_{m_2 = 0}^{n-m_1} \sum_{a_2\le m_1}|WD_k(m_2,a_2)|\sum_{a\le m_1 + m_2 - a_2} |LDI_k(n - m_1 - m_2, m_1 + m_2 - a_2, a)|.
\end{equation}
\end{proposition}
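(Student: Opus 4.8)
The plan is to decompose an element of $LDI_k(n, m_1)$ according to its first non-zero layer $w_2$ and the tail $\hat w = \{w_3, \dots, w_t\}$, exactly as in the discussion preceding the statement, and then translate the constraints on each piece into the summation bounds. First I would fix the prefix: by definition every element of $LDI_k(n,m_1)$ begins with a block $\overline 0_{m_1}$ of $m_1$ zeros and then a strictly positive value. The next subword $w_2$ is a weakly decreasing word of depth at most $k$; its length $m_2$ can be anything from $0$ up to $n - m_1$ (the case $m_2 = 0$ corresponding to the situation where the remaining piece begins a new word — or more precisely I would need to be a little careful here and note that since $w_2$ must carry the first positive value, the genuine contributions have $m_2 \ge 1$, but including $m_2 = 0$ costs nothing because $WD_k(0, a_2)$ is empty for $a_2 > 0$, or one absorbs the degenerate case harmlessly). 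Its starting (largest) value $a_2$ satisfies $a_2 \le m_1$, since the inversion sequence condition forces the value in position $m_1 + 1$ to be at most $m_1$; this gives the inner sum over $a_2 \le m_1$ weighted by $|WD_k(m_2, a_2)|$, which counts the choices for $w_2$ itself once its length and top value are fixed.

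Next I would handle the tail. After $\overline 0_{m_1}$ and $w_2$ we have consumed $m_1 + m_2$ positions and the smallest value appearing so far (namely in $w_2$) is $a_2$ — but wait, the relevant quantity is the largest value in $w_2$, which is $a_2$, since all later subwords must have minimum at least $\max(w_i)$. Here is the key bookkeeping move, lifted from the paragraph above the statement: subtract $a_2$ from every entry of $\hat w$. After this shift, the largest value available to be "spent" by $\hat w$ as an inversion sequence is $(m_1 + m_2) - a_2$, because the original inversion-sequence budget at position $m_1 + m_2 + 1$ is $m_1 + m_2$ and we have translated down by $a_2$. Thus the shifted $\hat w$ is an element of $LDI_k$ of length $n - m_1 - m_2$, beginning with $m_1 + m_2 - a_2$ "virtual zeros" (the entries that were exactly equal to $a_2$), with some first non-zero value $a \le m_1 + m_2 - a_2$. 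This yields the factor $|LDI_k(n - m_1 - m_2, m_1 + m_2 - a_2, a)|$ summed over $a \le m_1 + m_2 - a_2$. Finally I would verify the map is a bijection: the shift is clearly invertible (add $a_2$ back), and the remark that "the number of starting zeros guarantees that adding $a_2$ to all of these entries will still yield an element of $LDI_k(n)$" is exactly the check that the reconstructed word remains a valid inversion sequence — the $m_1$ leading zeros provide enough slack that the value $a_2 \le m_1$ can be re-added everywhere in the tail without violating $e_i \le i - 1$.

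The main obstacle I anticipate is not the algebra but pinning down the edge cases so the summation bounds are literally correct: in particular (i) making sure the case $m_2 = 0$ and the case where $\hat w$ is empty (i.e. $t = 2$) are consistently included — here one should check that $LDI_k(0, 0, a)$ and $WD_k(0, a_2)$ evaluate to the right thing so the formula degenerates correctly; (ii) confirming that "first non-zero value $a_2$ with $a_2 \le m_1$" is exactly the inversion-sequence constraint at position $m_1 + 1$ and nothing stronger is needed, which follows because weak decrease of $w_2$ then propagates the bound; and (iii) checking that after the shift the tail genuinely lands in $LDI_k(n - m_1 - m_2, \, m_1 + m_2 - a_2, \, a)$ with the stated parameters, i.e. that the number of leading zeros of the shifted tail is precisely $m_1 + m_2 - a_2$ rather than merely bounded by it — this is where the definition of $LDI_k(n, m_1, a_2)$ as the sequences beginning with exactly $m_1$ zeros followed by first non-zero value $a_2$ is used. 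Once these are nailed down, summing the product of the three factors over $m_2$, $a_2$, and $a$ gives Equation~\eqref{E2}.
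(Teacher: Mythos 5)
Your decomposition into the zero prefix $\overline{0}_{m_1}$, the first nonzero layer $w_2 \in WD_k(m_2,a_2)$, and the $a_2$-shifted tail landing in $LDI_k(n-m_1-m_2,\,m_1+m_2-a_2,\,a)$ is exactly the argument the paper gives (the paper supplies only the descriptive paragraph preceding the proposition rather than a formal proof). The loose ends you flag --- the degenerate $m_2=0$ term and whether the shifted tail has \emph{exactly} $m_1+m_2-a_2$ leading zeros rather than merely a budget of that size --- are real, but they are equally unaddressed in the paper's own sketch, so your proposal matches it in both approach and level of rigor.
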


Equations~\ref{E1} and \ref{E2} are rather difficult to use for computation. However they are presented to give some insight into directly enumerating these inversion sequences.

For completeness, we give some characterizations of words sortable by stacks of depth $k$.

\begin{proposition}  A word $w$ is sortable by a stack of depth $k$ if and only if $w$ avoids $120$ and also $k(k-1)\cdots10$.
\end{proposition}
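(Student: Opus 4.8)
The plan is to mirror the structure of the proof for pop stacks of depth $k$ (the earlier proposition in this subsection), adapting it to the standard stack. First I would establish necessity: any word $w$ that is sortable by a stack of depth $k$ must in particular be sortable by an (unrestricted) stack, so by Proposition~\ref{P:stackwords} it avoids $120$; and it must also avoid $k(k-1)\cdots10$, since a strictly decreasing sequence of $k+1$ distinct values would, at the moment the $(k{+}1)$st value is to be pushed, require $k+1$ distinct values to coexist in the stack — none of them can have been output yet because the output must be weakly increasing and each is smaller than all entries still in the stack below it. This contradicts the depth bound, so $w$ must avoid $k(k-1)\cdots10$.

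For sufficiency, suppose $w$ avoids both $120$ and $k(k-1)\cdots10$; I want to show the depth-$k$ stack can sort it. Since $w$ avoids $120$, by Proposition~\ref{P:stackwords} the ordinary (unbounded) stack-sorting algorithm, Algorithm~\ref{A:Stack}, outputs a sorted word. The key observation is that Algorithm~\ref{A:Stack} only pushes $w_i$ when the stack is empty or $w_i$ is at most the current top, so at every stage the contents of the stack, read bottom-to-top, form a weakly decreasing word; hence the number of distinct values in the stack at any stage equals the number of descents (strict decreases) in that weakly decreasing word, plus one. Therefore if at some stage the stack held more than $k$ distinct values, the stack would contain a strictly decreasing subsequence $v_0 > v_1 > \cdots > v_k$ of $k+1$ distinct entries (bottom to top). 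I would then argue these $k+1$ entries, together with whatever forced the deeper ones to still be present, trace back to a $k(k-1)\cdots10$ pattern in the original word $w$: the entry $v_j$ entered the stack after $v_{j-1}$ (it sits above it) and was smaller, and one checks directly from the algorithm that the $v_0 v_1 \cdots v_k$ occur in $w$ in that left-to-right order (each $v_j$ being pushed on top of a stack whose top is $\ge v_j$), giving exactly a decreasing pattern of length $k+1$, i.e. an occurrence of $k(k-1)\cdots10$ — the contradiction.

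Concretely, the cleanest way to extract the pattern is to look at the stack at the first moment it would exceed $k$ distinct values, i.e. just before a push that would create the $(k{+}1)$st distinct value; at that instant the stack contains entries whose distinct values are $u_1 > u_2 > \cdots > u_k$ (bottom to top), and the incoming letter $w_i < u_k$. The letters realizing $u_1, \dots, u_k$ were pushed in increasing order of position (deeper means earlier), so reading a representative of each value from the bottom up and then $w_i$ gives a strictly decreasing subsequence of length $k+1$ in $w$. Hence Algorithm~\ref{A:Stack} never uses more than $k$ distinct values in the stack on such a $w$, so $w$ is sortable by a stack of depth $k$; combined with necessity this proves the equivalence, and the corresponding statement for inversion sequences follows immediately since inversion sequences are a subclass of words.

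I expect the main obstacle to be the bookkeeping in the sufficiency direction: carefully justifying that "more than $k$ distinct values in the stack during Algorithm~\ref{A:Stack}" forces a genuine length-$(k{+}1)$ strictly decreasing subsequence in $w$ with the entries appearing in the correct left-to-right order. The subtlety is that equal values may be interleaved (the stack can contain repeated letters), so one must be careful to pick one representative position per distinct value and confirm these positions are increasing — this follows because the stack is filled bottom-to-top in order of reading $w$ and a new distinct (smaller) value is only created by a fresh push, but it is the point that needs to be written out with care rather than asserted.
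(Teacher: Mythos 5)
Your proof is correct and follows exactly the line the paper intends: the paper states this proposition without proof (for completeness, by analogy with its terse argument for depth-$k$ pop stacks), and your argument — necessity via ordinary stack sortability plus the forced coexistence of $k+1$ distinct values, sufficiency by showing the greedy Algorithm~\ref{A:Stack} keeps the stack weakly decreasing so exceeding $k$ distinct values would exhibit a $k(k-1)\cdots10$ occurrence — is precisely the expected justification, written out in more detail.
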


\begin{corollary} The number of words of length $n$ on an alphabet $[k]$ sortable by a stack of depth $1$, i.e. avoiding $10$ is $\binom{n+k-1}{k-1}$.
\end{corollary}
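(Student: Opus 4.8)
The plan is to reduce the statement to a standard stars-and-bars count. First I would record what the hypothesis says: for words, an occurrence of the pattern $10$ is a pair of positions $i<j$ with $w_i>w_j$, so $w$ avoids $10$ exactly when $w$ is weakly increasing, i.e. $w_1\le w_2\le\cdots\le w_n$. In particular such a $w$ also avoids $120$, since any occurrence of $120$ contains an occurrence of $10$ among its first and last entries; this matches the preceding proposition specialized to $k=1$, confirming that ``sortable by a stack of depth $1$'' is genuinely the same condition as ``avoids $10$'' and hence that the corollary really follows from that proposition.

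Next I would set up the bijection between weakly increasing words of length $n$ over the alphabet $[k]=\{1,\dots,k\}$ and the nonnegative integer solutions of $c_1+c_2+\cdots+c_k=n$: send $w$ to the tuple $(c_1,\dots,c_k)$, where $c_i$ is the number of indices $\ell$ with $w_\ell=i$. This map is well defined, and it is invertible because a weakly increasing word is uniquely recovered from the multiset of its letters (write the $c_1$ copies of $1$, then the $c_2$ copies of $2$, and so on). Thus counting the sortable words amounts to counting these compositions.

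Finally I would invoke the classical fact that the number of nonnegative integer solutions of $c_1+\cdots+c_k=n$ is $\binom{n+k-1}{k-1}$ (equivalently $\binom{n+k-1}{n}$), which gives the claimed formula. I do not expect any real obstacle: the only points needing care are fixing the orientation of the pattern $10$ correctly and noting that avoidance of $10$ subsumes avoidance of $120$, after which the argument is a one-line stars-and-bars computation.
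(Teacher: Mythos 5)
Your argument is correct and is essentially the paper's own proof, which also identifies the $10$-avoiding words as weakly increasing words and counts them as weak compositions of $n$ into $k$ parts via stars and bars. The extra observation that avoidance of $10$ subsumes avoidance of $120$ (so the depth-$1$ case of the preceding proposition reduces to the single pattern $10$) is a correct and worthwhile detail, but the substance of the count is the same.
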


\begin{proof}  These weakly increasing words can be thought of as weak compositions of $n$ into $k$ parts where each part represents the next largest letter.
\end{proof}

The depth 2 stack sortable words are enumerated by Burstein~\citeyear{BB} in the context of avoiding $120, 210$.

\begin{theorem} (Burstein)  The number of words of length $n$ on an alphabet $[k]$ avoiding $120, 210$ and thus sortable by a stack of depth $2$ is given by $$\frac{1 - (-1)^k}{2} + 2^n\sum_{i=0}^{\lfloor \frac{k-2}{2} \rfloor} \binom{n+k-3-2i}{n-1}.$$
\end{theorem}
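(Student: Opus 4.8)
The plan is to prove this by first reducing (via the preceding proposition on depth‑$2$ stacks, or simply reading off the theorem statement) to counting the words $w=w_1\cdots w_n$ of length $n$ over $[k]$ that avoid $120$ and $210$ — call this number $g(n,k)$ — and then obtaining a structural description of such words. The crucial observation is that $w$ contains $120$ or $210$ precisely when there are indices $i<j<l$ with $w_i\neq w_j$ and $w_l<\min(w_i,w_j)$ (if $w_i<w_j$ this is a copy of $120$, if $w_i>w_j$ a copy of $210$). For a fixed $l$ the most restrictive such condition comes from choosing $i,j$ to be occurrences of the two largest distinct values among $w_1,\dots,w_{l-1}$. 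Hence $w$ is good if and only if, whenever $w_1\cdots w_{l-1}$ already uses two distinct values, $w_l$ is at least the second‑largest of those distinct values. I would prove this characterization first.

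From this description, a good word is either constant ($k$ of these), or consists of a maximal constant block $a^t$ with $1\le t\le n-1$, followed by a letter $b\neq a$, followed by a ``walk'': reading the remaining $m:=n-t-1$ letters, one maintains the state $(M,S)=(\text{current max},\text{current second-max})$; the legal next letters from state $(M,S)$ are exactly the values $v\ge S$, and the new state depends only on whether $v<M$, $v=M$, or $v>M$. Letting $W_m(M,S)$ denote the number of legal walks of length $m$ from $(M,S)$, and summing over $t$, over $b$, and over the unordered pair $\{M,S\}=\{\max(a,b),\min(a,b)\}$ (each contributing two ordered choices of $(a,b)$), one gets
\begin{equation*}
g(n,k)=k+2\sum_{m=0}^{n-2}\ \sum_{1\le S<M\le k}W_m(M,S).
\end{equation*}

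Next I would evaluate the inner double sums. Reparametrizing a state as $(\alpha,\beta)=(M-S,\,k-M)$ with $\alpha\ge1$, $\beta\ge0$ makes the transition rule — and hence $W_m(\alpha,\beta)$ — independent of $k$, and yields
\begin{equation*}
W_{m+1}(\alpha,\beta)=2W_m(\alpha,\beta)+\sum_{\alpha'=1}^{\alpha-1}W_m(\alpha',\beta)+\sum_{j=1}^{\beta}W_m(j,\beta-j),\qquad W_0\equiv1.
\end{equation*}
Setting $R_m(s):=\sum_{\alpha+\beta=s,\ \alpha\ge1}W_m(\alpha,\beta)$ and summing this recurrence over states with $\alpha+\beta=s$, the last two sums both collapse to $\sum_{\sigma=1}^{s-1}R_m(\sigma)$, giving the clean recurrence $R_{m+1}(s)=2R_m(s)+2\sum_{\sigma=1}^{s-1}R_m(\sigma)$ with $R_0(s)=s$. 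An induction on $m$, using Pascal's rule and the hockey‑stick identity, then gives $R_m(s)=2^m\binom{s+m}{m+1}$, and one more hockey‑stick summation over $s$ produces $\sum_{1\le S<M\le k}W_m(M,S)=\sum_{s=1}^{k-1}R_m(s)=2^m\binom{m+k}{k-2}$. Substituting back, $g(n,k)=k+\sum_{m=0}^{n-2}2^{m+1}\binom{m+k}{k-2}$.

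Finally I would identify this with the claimed expression by showing both are the unique solution of the same first‑order recurrence in $n$ with value $k$ at $n=1$. From the formula just derived, $g(n,k)-g(n-1,k)=2^{n-1}\binom{n+k-2}{k-2}$. On the other side, using $\binom{n+k-3-2i}{n-1}=\binom{n-1+j}{j}$ with $j=k-2-2i$, the claimed quantity equals $\frac{1-(-1)^k}{2}+2^n\sum_{j}\binom{n-1+j}{j}$ summed over $j\equiv k\pmod 2$, $0\le j\le k-2$; a short Pascal‑rule manipulation collapses its $n$‑difference to $2^{n-1}\sum_{j=0}^{k-2}\binom{n-1+j}{j}=2^{n-1}\binom{n+k-2}{k-2}$, and counting the surviving terms shows it equals $k$ at $n=1$. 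I expect the main obstacle to be the middle stage: choosing the right state space for the walks and the right auxiliary quantity $R_m(s)$ so that the bookkeeping closes into a recurrence one can actually solve — the pattern‑avoidance characterization and the concluding binomial identity are comparatively routine.
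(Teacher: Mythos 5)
The paper does not prove this statement at all: it simply quotes the formula from Burstein's enumeration of $[k]$-words avoiding $\{120,210\}$, so any complete argument you give is necessarily a different route. Your proposal is correct. The characterization is right (a word contains $120$ or $210$ exactly when some letter is preceded by two distinct larger values, so legality of $w_l$ means $w_l\ge$ the second-largest distinct value of the prefix), the decomposition $g(n,k)=k+2\sum_{m=0}^{n-2}\sum_{1\le S<M\le k}W_m(M,S)$ is valid, and the shift to coordinates $(\alpha,\beta)=(M-S,k-M)$ gives exactly the recurrence you wrote (your phrase ``the new state depends only on whether $v<M$, $v=M$, or $v>M$'' is loose, since for $S<v<M$ the new state is $(M,v)$, but the recurrence itself accounts for this correctly). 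I checked the key computations: $R_{m+1}(s)=2R_m(s)+2\sum_{\sigma<s}R_m(\sigma)$ with $R_0(s)=s$ does give $R_m(s)=2^m\binom{s+m}{m+1}$ by hockey stick, $\sum_{s=1}^{k-1}R_m(s)=2^m\binom{m+k}{k-2}$, hence $g(n,k)=k+\sum_{m=0}^{n-2}2^{m+1}\binom{m+k}{k-2}$, and the reconciliation works: both this and Burstein's expression have $n$-difference $2^{n-1}\binom{n+k-2}{k-2}$ (the parity-restricted sum plus its Pascal shift fills in all $j$ from $0$ to $k-2$) and both equal $k$ at $n=1$; spot checks at $(n,k)=(3,3)$ and $(3,4)$ agree. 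Compared with Burstein's original generating-function treatment of words avoiding pairs of length-three patterns, your argument is elementary and self-contained, and it yields the alternative closed form $k+\sum_{m=0}^{n-2}2^{m+1}\binom{m+k}{k-2}$ along the way; what it does not give is the bivariate generating function machinery that handles many pattern sets uniformly.
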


Finally, we note that Kotsireas, Y\i ld\i r\i m, and the first author~\citeyear{MY} give a functional equation for the inversion sequences of length $n$ avoiding $120, 210$ and thereby sortable by a stack of depth $2$.

\subsection{Generalizing pop stacks}

We can also expand the ability of our sorting machine using a similarly defined version of Atkinson's~\citeyear{atkinson:generalized-sta:} extension of push and pop operations.  Specifically, we will define a  $(r,1)$-pop stack  that is also modified to take into account the repetition of letters in inversion sequences (and more generally words).   In Atkinson's work, an $(r,1)$-stack would have allowed for push operations to push entries into the stack in any of the top $r$ positions.  We can again naturally redefine generalized stacks for words in a way that is consistent with Atkinson's work on permutations.

\begin{definition}  Let an  $(r,1)$-stack allow the push operation to push an entry to any of the top $r$ ``value positions'' in the stack where consecutive entries of the same value in the stack are grouped together.  
\end{definition}

Note that the above definition could be extended to the broader notion of an $(r,s)$ stack where we allow the pop operation to be defined in terms of these same value positions.

\begin{example} Suppose a $(3,1)$-stack contained the values $0,0, 3, 4,6, 6$ as shown in Figure~\ref{F:(3,1)-stack} with the remainder of the word being $230$. The available positions for new entries to enter the stack at each stage are indicated by the $\square$s.  
\end{example}

\begin{figure}
\scalebox{0.42}{\begin{tikzpicture}
	\begin{scope}
					\draw (3.5,1) -- (5,1);
					\draw (3.5,1) -- (3.5,8);
					\draw (5,1) -- (5,8);
					\draw (5.5, 8.5) -- (10,8.5);
					\node[][align=left] at (6,9) {\fontsize{18}{22.4}\selectfont$230$};
					\node[][align=left] at (4.25,1.5) {\fontsize{18}{22.4}\selectfont$6$};
					\node[][align=left] at (4.25,2.1) {\fontsize{18}{22.4}\selectfont$6$};
					\node[][align=left] at (4.25,2.7) {\fontsize{18}{22.4}\selectfont$4$};
					\node[][align=left] at (4.25,3.3) {\fontsize{18}{22.4}\selectfont$\square$};
					\node[][align=left] at (4.25,3.9) {\fontsize{18}{22.4}\selectfont$3$};
					\node[][align=left] at (4.25,4.5) {\fontsize{18}{22.4}\selectfont$\square$};
					\node[][align=left] at (4.25,5.1) {\fontsize{18}{22.4}\selectfont$0$};
					\node[][align=left] at (4.25,5.7) {\fontsize{18}{22.4}\selectfont$0$};
					\node[][align=left] at (4.25,6.3) {\fontsize{18}{22.4}\selectfont$\square$};
	\end{scope}	
	
	\begin{scope}[xshift = 9cm]
					\draw (3.5,1) -- (5,1);
					\draw (3.5,1) -- (3.5,8);
					\draw (5,1) -- (5,8);
					\draw (5.5, 8.5) -- (10,8.5);
					\node[][align=left] at (6,9) {\fontsize{18}{22.4}\selectfont$30$};
					\node[][align=left] at (4.25,1.5) {\fontsize{18}{22.4}\selectfont$6$};
					\node[][align=left] at (4.25,2.1) {\fontsize{18}{22.4}\selectfont$6$};
					\node[][align=left] at (4.25,2.7) {\fontsize{18}{22.4}\selectfont$4$};
					\node[][align=left] at (4.25,3.3) {\fontsize{18}{22.4}\selectfont$3$};
					\node[][align=left] at (4.25,3.9) {\fontsize{18}{22.4}\selectfont$\square$};
					\node[][align=left] at (4.25,4.5) {\fontsize{18}{22.4}\selectfont$2$};
					\node[][align=left] at (4.25,5.1) {\fontsize{18}{22.4}\selectfont$\square$};
					\node[][align=left] at (4.25,5.7) {\fontsize{18}{22.4}\selectfont$0$};
					\node[][align=left] at (4.25,6.3) {\fontsize{18}{22.4}\selectfont$0$};
					\node[][align=left] at (4.25,6.9) {\fontsize{18}{22.4}\selectfont$\square$};
	\end{scope}	
	
	\begin{scope}[xshift = 18cm]
					\draw (3.5,1) -- (5,1);
					\draw (3.5,1) -- (3.5,8);
					\draw (5,1) -- (5,8);
					\draw (5.5, 8.5) -- (10,8.5);
					\node[][align=left] at (6,9) {\fontsize{18}{22.4}\selectfont$0$};
					\node[][align=left] at (4.25,1.5) {\fontsize{18}{22.4}\selectfont$6$};
					\node[][align=left] at (4.25,2.1) {\fontsize{18}{22.4}\selectfont$6$};
					\node[][align=left] at (4.25,2.7) {\fontsize{18}{22.4}\selectfont$4$};
					\node[][align=left] at (4.25,3.3) {\fontsize{18}{22.4}\selectfont$3$};
					\node[][align=left] at (4.25,3.9) {\fontsize{18}{22.4}\selectfont$3$};
					\node[][align=left] at (4.25,4.5) {\fontsize{18}{22.4}\selectfont$\square$};
					\node[][align=left] at (4.25,5.1) {\fontsize{18}{22.4}\selectfont$2$};
					\node[][align=left] at (4.25,5.7) {\fontsize{18}{22.4}\selectfont$\square$};
					\node[][align=left] at (4.25,6.3) {\fontsize{18}{22.4}\selectfont$0$};
					\node[][align=left] at (4.25,6.9) {\fontsize{18}{22.4}\selectfont$0$};
					\node[][align=left] at (4.25,7.5) {\fontsize{18}{22.4}\selectfont$\square$};
	\end{scope}	
	
	\begin{scope}[xshift = 27cm]
					\draw (3.5,1) -- (5,1);
					\draw (3.5,1) -- (3.5,8);
					\draw (5,1) -- (5,8);
					\draw (5.5, 8.5) -- (10,8.5);
					\node[][align=left] at (4.25,1.5) {\fontsize{18}{22.4}\selectfont$6$};
					\node[][align=left] at (4.25,2.1) {\fontsize{18}{22.4}\selectfont$6$};
					\node[][align=left] at (4.25,2.7) {\fontsize{18}{22.4}\selectfont$4$};
					\node[][align=left] at (4.25,3.3) {\fontsize{18}{22.4}\selectfont$3$};
					\node[][align=left] at (4.25,3.9) {\fontsize{18}{22.4}\selectfont$3$};
					\node[][align=left] at (4.25,4.5) {\fontsize{18}{22.4}\selectfont$\square$};
					\node[][align=left] at (4.25,5.1) {\fontsize{18}{22.4}\selectfont$2$};
					\node[][align=left] at (4.25,5.7) {\fontsize{18}{22.4}\selectfont$\square$};
					\node[][align=left] at (4.25,6.3) {\fontsize{18}{22.4}\selectfont$0$};
					\node[][align=left] at (4.25,6.9) {\fontsize{18}{22.4}\selectfont$0$};
					\node[][align=left] at (4.25,7.5) {\fontsize{18}{22.4}\selectfont$0$};
					\node[][align=left] at (4.25,8.1) {\fontsize{18}{22.4}\selectfont$\square$};
					
	\end{scope}		
\end{tikzpicture}}
\caption{A $(3,1)$-stack shown at several stages.}
\label{F:(3,1)-stack}
\end{figure}
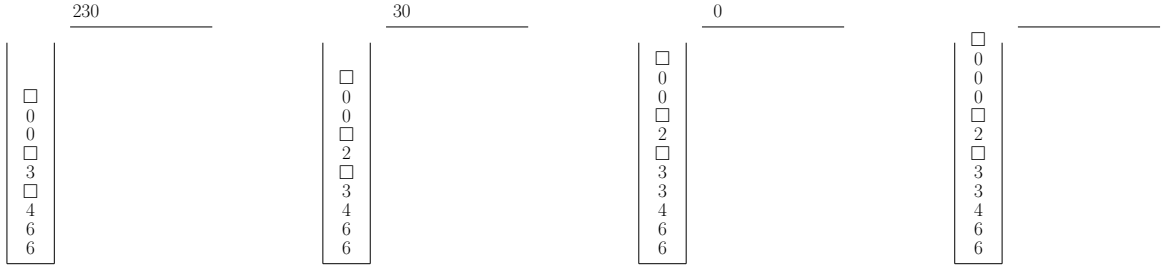

Varying the restrictions of depth and generalizations of the push operation as described above, we get a family of stacks and a family of pop stacks.  There are some variations that no longer give us new machines though.  Specifically, if our stack or pop stack has limited depth, having the option to push entries to (or pop entries from) spaces that do not exist does add any sorting power.

\begin{proposition}  For a $(r,1)$-stack or $(r,1)$-pop stack of depth $d$, if $r>d$, the machine sorts the same words as if $r=d$.
\end{proposition}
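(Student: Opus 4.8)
The plan is to show that once $r \geq d$ the index $r$ places no real constraint on the push operation, so that the depth-$d$ $(r,1)$-stack and the depth-$d$ $(d,1)$-stack admit exactly the same legal computations (and likewise for pop stacks). First I would unwind the definitions. A push into an $(r,1)$-stack must target one of the top $r$ \emph{value positions}, and equal entries are grouped into a single value position, so a value position records a value currently held in the (pop) stack. By the depth restriction at most $d$ distinct values are ever present, hence at every stage of a legal computation there are at most $d$ value positions. Therefore, for $r \geq d$, ``the top $r$ value positions'' and ``the top $d$ value positions'' denote the same set, namely all value positions currently present; pushing ``into value position $d+1,\dots,r$'' is pushing into a place that does not exist.

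Next I would note that the pop half of the operation is untouched by $r$: the ``$1$'' in $(r,1)$ says that a pop removes the single top entry of a stack, or the entire contents of a pop stack, and neither description mentions $r$. Combining this with the previous paragraph, at every stage of any computation the set of legal moves of the depth-$d$ $(r,1)$-machine with $r>d$ coincides with the set of legal moves of the depth-$d$ $(d,1)$-machine. Hence these two machines have identical sets of computations on every input word, so they produce identical outputs, and in particular exactly the same words get sorted.

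The step I expect to require the most care is justifying the claim ``at most $d$ value positions''. Under the reading in which a value position is a maximal run of equal \emph{consecutive} entries, a configuration such as $0,3,0$ has three value positions but only two distinct values, so the count is not immediate. Since we only need the conclusion for computations that actually sort, I would add the observation that in any computation ending with the (pop) stack empty and a weakly increasing output, no value can occupy two non-adjacent blocks of the stack: pushes into interior value positions and pops from the top only refine, never reverse, the relative order of the entries currently stored, so a block of value $v$ lying above a block of value $u \neq v$ which in turn lies above a further block of value $v$ would force the output to contain a subsequence $v,\dots,u,\dots,v$, impossible in a weakly increasing word. Thus in every sorting computation the number of blocks equals the number of distinct values, which is at most $d$, and the argument of the first two paragraphs goes through verbatim.
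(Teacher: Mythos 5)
Your argument is correct, and it is worth noting that the paper itself offers no proof of this proposition: it is stated with only the surrounding remark that pushing to ``spaces that do not exist'' cannot add sorting power, which is exactly the content of your first two paragraphs. The real value of your write-up is the third paragraph, where you notice that this remark is not literally immediate under the run-based definition of value positions: since a value position is a maximal block of equal consecutive entries, a legal depth-$d$ configuration such as $0,3,0$ can have more blocks than distinct values, so an $(r,1)$-machine with $r>d$ genuinely has moves unavailable to the $(d,1)$-machine. Your repair is the right one: restrict to computations that actually sort, observe that pushes (which only insert) and pops (which remove from the top, or the whole pop stack top-to-bottom) never reverse the relative order of entries already stored, so a configuration with blocks $v$ above $u\neq v$ above $v$ forces the subsequence $v,u,v$ in the output, contradicting weak increase; hence in a sorting computation the number of blocks equals the number of distinct values, which the depth bound caps at $d$, and every push used is legal for the $(d,1)$-machine, while the converse containment of moves is trivial. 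One small wording issue: your opening claim that the two machines ``admit exactly the same legal computations'' is false in general (as your own $0,3,0$ example shows); what your argument actually establishes is that every \emph{sorting} computation of the $(r,1)$-machine is a computation of the $(d,1)$-machine and vice versa, which is all the proposition needs, so you may want to phrase the conclusion that way rather than saying the earlier argument goes through verbatim.
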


As such, we will restrict $r \leq d$.

We will first consider the case where $r=d=2$.

The $(2,1)$-pop stack of depth two has the nice characteristic where it sorts words (and permutations) that avoid three patterns of length three.

\begin{theorem}~\label{T:2-1-2_pop}
A $(2,1)$-pop stack of depth two sorts exactly the words that avoid $120, 201, 210$.
\end{theorem}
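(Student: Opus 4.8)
The plan is to reduce sortability to a combinatorial condition on a block decomposition of the input and then match that condition with the stated pattern avoidance. First I would record the following picture of the machine. Running a $(2,1)$-pop stack of depth two on a word $w$ splits $w$ into maximal consecutive factors $B_1,\dots,B_m$ --- the \emph{blocks} --- each block being the set of entries that sit in the stack together between two consecutive flushes. Two structural facts drive everything: since a push may only enter one of the top two ``value positions'' of the stack, and in particular never below the bottom value group (as in Figure~\ref{F:(3,1)-stack}), the first entry of a block enters the empty stack, becomes the bottom, stays the bottom, and hence is the maximum of its block; and since the stack has depth two, each block contains at most two distinct values. In any run that sorts $w$ (and in the greedy run by construction) the stack is weakly decreasing at each flush, so flushing a block $B$ appends to the output exactly the weakly increasing word $(\min B)^{\,q}(\max B)^{\,p}$; thus the output is the concatenation $\overline{B_1}\cdots\overline{B_m}$ of these words. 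Consequently the run sorts $w$ if and only if $\max(B_i)\le\min(B_{i+1})$ for all $1\le i<m$, and $w$ is sortable if and only if it admits some such decomposition --- equivalently, if and only if the greedy run (push whenever legal, flush when stuck) sorts $w$.

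For the ``only if'' direction I would argue by contradiction: assume $w$ is sortable, fix a run that sorts it with blocks $B_1,\dots,B_m$, and suppose $w$ contains an occurrence of $120$, $201$, or $210$ at positions $p_1<p_2<p_3$. Let $\beta(i)$ denote the index of the block containing position $i$. Because the blocks are consecutive, $\beta$ is non-decreasing; because the output $\overline{B_1}\cdots\overline{B_m}$ is weakly increasing (and each $\overline{B_j}$ is weakly increasing), among our three witnesses an entry of smaller value is output no later, so its block index is no larger. In each of the three pattern types these two monotonicities squeeze $\beta(p_1)=\beta(p_2)=\beta(p_3)$: for example for $120$ one has $w_{p_3}<w_{p_1}<w_{p_2}$, which gives $\beta(p_3)\le\beta(p_1)$ from the output and $\beta(p_1)\le\beta(p_2)\le\beta(p_3)$ from the positions, a closed chain; the $201$ and $210$ cases are the same kind of squeeze. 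But then a single block contains three distinct values, contradicting the second structural fact. Hence $w$ avoids $120,201,210$.

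For the ``if'' direction I would assume $w$ avoids $120,201,210$ and run the machine greedily; by the structural facts the resulting blocks automatically have at most two distinct values and begin with their maxima, so it only remains to check $\max(B_i)\le\min(B_{i+1})$ for all $i<m$. Suppose this fails at some $i$. Write $s$ for the first position of $B_i$ (so $w_s=\max(B_i)=:v$), $s'$ for the first position of $B_{i+1}$ (so $w_{s'}=\max(B_{i+1})$), and, if $B_i$ has a second value $b<v$, let $r\in(s,s')$ be a position with $w_r=b$. The greedy flush at $s'$ was forced, so $w_{s'}$ is either (a) larger than $v$, or (b) strictly between $b$ and $v$, or (c) smaller than $b$, the last two only when $B_i$ has a second value. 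In case (a) the failure $\max(B_i)>\min(B_{i+1})$ produces a position $u>s'$ with $w_u=\min(B_{i+1})<v<w_{s'}$, and $w_s,w_{s'},w_u$ form a $120$; in case (b) the triple $w_s,w_r,w_{s'}$ forms a $201$; in case (c) it forms a $210$. Each contradicts the hypothesis, so $\max(B_i)\le\min(B_{i+1})$ throughout and the greedy run sorts $w$.

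The main obstacle I anticipate is the block characterization itself, and within it the precise reading of the $(2,1)$-push in terms of value positions --- in particular the fact that a new entry can never be inserted beneath the current bottom value group, which is exactly what forces the first entry of each block to be that block's maximum. Once that is pinned down, the necessity argument is pure bookkeeping with the two monotonicities of $\beta$, and the only place an error could easily slip in is the exhaustive case split (a)--(c) together with the routine verification that each case exhibits the claimed length-three pattern.
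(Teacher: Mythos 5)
Your proof is correct, and it reaches the result by a more systematic route than the paper's. The paper argues necessity directly on the three entries of a $120$, $201$, or $210$ occurrence: since a depth-two pop stack cannot hold three distinct values and a pop empties it entirely, the first (larger) entry must exit before the third (smaller) one enters, leaving an uncorrectable inversion. Your block decomposition packages the same depth obstruction differently: the two monotonicities of the block-index function $\beta$ squeeze all three witnesses into a single block, which then carries three distinct values. This treats the three patterns uniformly and avoids tracing stack dynamics case by case, which is a genuine gain in rigor over the paper's rather terse sentence. For sufficiency the paper argues the contrapositive with a short case analysis of how a forced flush can strand a larger value ahead of a smaller one; your greedy-run analysis with the explicit criterion $\max(B_i)\le\min(B_{i+1})$ and the exhaustive cases (a)--(c) is the same idea made precise, and the case split is complete (equality with one of the two resident values never forces a flush, so (a)--(c) really are the only failure modes).

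One caveat you should repair, even though it does not invalidate the argument: your reading of the $(2,1)$-push as ``never below the bottom value group'' is not the intended one. In Figure~\ref{F:(3,1)-stack} no slot appears below the bottom group only because there are already more value groups than $r$; when the stack holds a single value group, the second value position \emph{is} beneath it, and the paper's own proof relies on this when it observes that the first two entries of the problematic configuration ``could appear in either order in $w$ to form either a $021$ or a $201$ pattern.'' So your structural fact that the first entry of a block is its maximum holds for your greedy run (which simply declines to use the bottom slot), not for every run of the machine. Fortunately your proof survives: the sufficiency direction only needs \emph{some} run to succeed, and a run of a restricted machine is a run of the true machine; and your necessity argument never uses the first-entry-is-maximum claim, only that blocks are consecutive factors with at most two distinct values whose flushed output must be weakly increasing. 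You should either restate the structural fact as a property of the greedy strategy you choose, or add the short observation that inserting below a lone bottom group never enlarges the set of sortable words.
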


\begin{proof}
Because the $(2,1)$-pop-stack we are using only has depth two, any pattern of length three with three distinct values will have at least the first entry popped out before the third entry can enter.  For all of $120, 201, 210$, this means the larger first entry is popped out before the smaller last entry enters the stack.  Thus no word containing $120, 201, 210$ is sortable by a $(2,1)$-pop stack of depth two.

Now consider a word $w$ that cannot be sorted by a $(2,1)$-pop stack of depth two.  If a larger entry was forced out before a smaller entry could be pushed into the stack due to the limited depth, there are two ways this could have happened.   One is that two distinct larger entries appeared before a smaller entry, that is $w$ contains a $120$ or $210$ pattern.  Otherwise, we have a large entry that is below a small entry in the pop stack, and our third entry waiting to enter is between the small and large entry in value.  Because of the $(2,1)$ generalization, in theory, the first two entries could appear in either order in $w$ to form either a $021$ or a $201$ pattern.  However, in the case of a $021$ pattern, the $0$ entry could be popped out before pushing the $2$ into the pop stack, so only the $201$ pattern presents a problem.
\end{proof}

Note:  Algorithm~\ref{A:(2,1)-popD2} gives a procedure for sorting with a $(2,1)$-pop stack of depth $2$.  Because there are only two possible values in $PS2$, it would be equivalent to simply push $w_i$ to the bottom of $PS2$ if it matched the larger value in the $(2,1)$-pop stack of depth $2$ rather than keep track of how many smaller entries there are.  However, we present the algorithm in this form for the option of generalization if desired.

\begin{algorithm}
\caption{$(2,1)$-Pop Stack of Depth $2$ Sorting Algorithm}\label{A:(2,1)-popD2}
\begin{algorithmic}
\State $w=w_1w_2\cdots w_n$ is a word of length $n$.
\State $PS2$ is a $(2,1)$-Pop Stack of Depth $2$ .
\State $O$ is an empty array.
\State $i = 1$  is the index of the next entry in the input.
\State $s = 0$ is the counter for the number of copies of the smallest value in the $(2,1)$-Pop Stack of Depth $2$.
\Procedure{PopStackSort2}{$w,n$}
\While{$i < n+1$}
	\If{$PS2$ is empty}
		\State push $w_i$ to the top of $PS2$
		\State $i = i+1$  
		\State $s = 1$
	\ElsIf{$w_i = $ the top/smallest element(s) in $PS2$}
   		 \State push $w_i$ to the top of $PS2$
		 \State $i = i+1$
		 \State $s = s+1$
	\ElsIf{$w_i = $ the larger element(s) in $PS2$}
   		 \State push $w_i$ to the $(s+1)$st position of $PS2$
		 \State $i = i+1$
	\ElsIf{there are already two distinct values in $PS2$}
		\While{$PS2$ is nonempty}
			\State pop the top entry of $PS2$ to the end of $O$
		\EndWhile
	\ElsIf{$p_i < $ top element of $PS2$}
   		 \State push $p_i$ to the top of $PS2$
		 \State $i = i+1$
		 \State $s = 1$
   	\Else
		\While{$PS2$ is nonempty}
			\State pop the top entry of $PS2$ to the end of $O$
		\EndWhile
	\EndIf
\EndWhile \\
\Return $O$
\EndProcedure
\end{algorithmic}
\end{algorithm}

\begin{theorem} Algorithm~\ref{A:(2,1)-popD2} is optimal for sorting words with an $(r,1)$-pop stack of depth $2$.
\end{theorem}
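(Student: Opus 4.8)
The plan is to read ``optimal'' in the strong sense: Algorithm~\ref{A:(2,1)-popD2} is deterministic, and the claim is that it sorts a word $w$ exactly when \emph{some} sequence of $(r,1)$-pop-stack-of-depth-two operations sorts $w$. Since a machine with $r>d=2$ has no more power than the $(2,1)$ version (by the earlier proposition on $r>d$), and since Algorithm~\ref{A:(2,1)-popD2} is itself a legal run of a $(2,1)$-pop stack of depth two, one half is free: anything the algorithm sorts is sortable. For the other half, by Theorem~\ref{T:2-1-2_pop} it suffices to show that the algorithm sorts every word $w$ avoiding $120$, $201$, and $210$; termination and the bound of two distinct stored values are clear from inspecting the branches.

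Before the main argument I would record two structural facts about an arbitrary run. First, at every stage $PS2$ holds its at most two distinct values in strictly decreasing order from bottom to top: a value strictly larger than the current top is never pushed, since none of the ``$\le$ top'', ``equals the larger value'', or ``$<$ top of a single-valued stack'' branches then applies, so such an entry instead provokes a full pop. Consequently every pop event appends to the output $O$ a weakly increasing block, the smaller value first. Second, in both ``pop everything'' branches the index $i$ is not advanced, so the entry $w_i$ that caused the pop becomes the seed of the next stack episode; when the popped stack held a single value $a$ this seed exceeds $a$, and when it held two values $u<v$ the seed of the episode just ended was necessarily the larger value $v$, with every copy of $u$ in the stack having been read after it.

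The core is a contradiction argument. Assume $w$ avoids $120,201,210$ yet the output is not weakly increasing, so some $y$ is output before some $x$ with $x<y$. By the first fact, $x$ and $y$ do not leave the stack in the same pop, so $y$ is popped first; and at the moment of $y$'s pop, $x$ is neither already in $O$ (else $x$ would precede $y$) nor in the stack (else, again by monotonicity, $x$ would precede $y$), so $x$ is still in the input and is read after the entry that triggered $y$'s pop. Now split on that trigger. If the popped stack was single-valued, then $y=a$, the trigger $z$ satisfies $z>a$, and $w$ contains an occurrence of $a$, then $z$, then $x$; since $x<a<z$ this is a $120$. If the popped stack held $u<v$, then by the second fact $w$ contains, in this order, the seed $v$, a copy of $u$, the trigger $t\notin\{u,v\}$, and then the later entry $x<y$; a short case check on the relative order of $x,u,v,t$ yields a $210$ (when $y=u$, or when $x<u$), a $201$ (when $u<x<v$, or when $x=u$ and $u<t<v$), or a $120$ (when $x$ is a later copy of the value $u$ and $t>v$, read from the $1020$ pattern $v\,u\,t\,x$). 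Each case contradicts the hypothesis, so the algorithm sorts $w$ and the theorem follows.

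I expect the two-valued-stack case to be the only delicate point, and within it the sub-case where $x$ repeats the value $u$ while the trigger $t$ exceeds $v$: there no forbidden pattern is visible among $v,u,x$ alone and one must bring in $t$. Everything else is bookkeeping about which branch fires and the order in which entries are popped.
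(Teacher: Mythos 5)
Your proof is correct and follows essentially the same strategy as the paper's: assume a descent in the output, observe that the two offending entries must leave the stack in different pops, split on whether the stack held one or two distinct values at the offending pop, and extract a forbidden pattern. The one substantive difference is that you are more careful in the two-valued case: when the later small entry $x$ has the same value as the smaller stack value $u$, the triple $v,u,x$ reduces to the permitted pattern $100$, and one must bring in the pop trigger $t$ to locate a $120$ (when $t>v$) or a $201$ or $210$ (when $t<v$); the paper's proof simply asserts that $w_jw_mw_k$ forms a $201$ or $210$ and does not address this sub-case, so your version actually patches a small gap. The only blemish on your side is that your parenthetical case list omits the combination $x=u$ with $t<u$, which yields a $210$ from $v,u,t$ directly, but that is cosmetic.
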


\begin{proof}
By way of contradiction, let $w$ be a sortable word that is not sorted by our algorithm on a $(2,1)$-pop stack of depth 2.  Specifically, $S(w)$ must contain a descent, say $w_jw_k$.  Because Algorithm~\ref{A:(2,1)-popD2} never allows a descent in the pop stack, $w_j$ must have been the last entry popped from one pop operation and $w_k$ must have been the first entry popped from the next pop operation.  The entries $w_j$ and $w_k$ must have appeared in the same relative order (forming an inversion) in $w$.  The only way to remove this inversion would be for both entries to be in the pop stack at the same time.

Notice Algorithm~\ref{A:(2,1)-popD2} only pops the pop stack in two cases.  In the first case, the pop stack already contained two distinct values that are different from the next input value.  In this case, $w_j$ is one of the larger entries in the pop stack.  Also,  $w_j$ is the first entry in $w$ from the all of entries of the same value popped at the same time as $w_j$ because  a new entry only enters in the pop stack if it is smaller than the other value represented or the new entry matches a value already there and is placed above it in the pop stack.  In fact, $w_j$ must be pushed into an empty pop stack as we are not inserting any unmatched larger entries into the pop stack after a smaller entry.  Thus there must be a smaller entry $w_m$ popped at the same time as $w_j$ and was inserted into the pop stack after $w_j$.  Notice $w_j w_m w_k$ form a $201$ or $210$ pattern in $w$ which is forbidden.

Otherwise, when $w_j$ is popped from the pop stack, only values matching $w_j$ can be in the pop stack.  For the next entry $w_i$, not to enter the pop stack, we must have $w_j < w_i$.  In this case, $w_j w_i w_k$ form a $120$ pattern in $w$ which is also forbidden.
\end{proof}

The enumeration problem for this $(2,1)$-pop stack of depth $2$ can be considered for permutations, words, and specifically inversion sequences. The permutations of length $n$ avoiding $231, 312, 321$, i.e. $120,201,210$, are known as the \emph{layered permutations} where each layer can have size one or two.  Note that layered permutations are exactly the permutations that avoid $231,312$.  Furthermore, the restriction of avoiding $321$ forces each layer (consecutive decreasing subsequence) to have size at most two.   These restricted layered permutations are enumerated by the Fibonacci numbers as shown by Simion and Schmidt~\citeyear{simion:restricted-perm:}.  As a consequence of their result and Theorem~\ref{T:2-1-2_pop} we have the following corollary.

\begin{corollary}  (Simion and Schmidt) The number of permutations of length $n$ sortable by a $(2,1)$-pop stack of depth $2$ is the $(n+1)^{\text{st}}$ Fibonacci number $F_{n+1}$.
\end{corollary}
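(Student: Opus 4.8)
The plan is to reduce the statement to the enumeration of a permutation class and then solve a Fibonacci recurrence. By Theorem~\ref{T:2-1-2_pop}, a word is sortable by a $(2,1)$-pop stack of depth two exactly when it avoids $120$, $201$, and $210$; since a permutation has no repeated entries, it avoids these three word patterns precisely when it avoids $231$, $312$, and $321$. Hence the permutations of length $n$ sortable by this machine are exactly those in $\Av(231,312,321)$, and it suffices to show $|\Av(231,312,321)\cap S_n| = F_{n+1}$.

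Next I would invoke the structural description recalled just before the corollary: the permutations avoiding $231$ and $312$ are precisely the layered permutations, and within this family avoiding $321$ is equivalent to every layer having length at most two. So a sortable permutation of length $n$ is a concatenation of layers $\sigma_1\sigma_2\cdots\sigma_m$, each of length $1$ or $2$, with all entries of $\sigma_i$ smaller than all entries of $\sigma_{i+1}$; such a permutation is determined entirely by the ordered sequence of layer lengths, i.e.\ by a composition of $n$ into parts equal to $1$ or $2$.

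Then I would let $a_n$ be the number of such permutations and condition on the first layer. If the first layer has length $1$ it must be the entry $1$ and the rest is an order-isomorphic copy of a sortable permutation on $\{2,\dots,n\}$, giving $a_{n-1}$ possibilities; if the first layer has length $2$ it is the descent $21$ and the rest is a sortable permutation on $\{3,\dots,n\}$, giving $a_{n-2}$. Hence $a_n = a_{n-1} + a_{n-2}$ for $n\ge 2$, with $a_0 = 1$ (the empty permutation) and $a_1 = 1$. Since $F_1 = F_2 = 1$, an induction gives $a_n = F_{n+1}$, matching the enumeration found by Simion and Schmidt.

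There is no serious obstacle here; the only points requiring care are the translation of the forbidden word patterns $120,201,210$ into the permutation patterns $231,312,321$ (immediate once one recalls that a word pattern on distinct values reduces to a permutation pattern) and pinning down the Fibonacci indexing convention so that the initial data $a_0 = a_1 = 1$ align with $F_{n+1}$. Equivalently, one could finish by exhibiting the explicit bijection between layered permutations with layers of size at most two and compositions of $n$ into $1$s and $2$s, which are classically counted by the Fibonacci numbers.
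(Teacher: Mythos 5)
Your argument is correct and follows the same route as the paper: translate sortability into avoidance of $231,312,321$ via Theorem~\ref{T:2-1-2_pop}, identify these permutations as layered permutations with layers of size at most two, and conclude the Fibonacci count. The only difference is that the paper simply cites Simion and Schmidt for the final enumeration, whereas you supply the (standard) recurrence $a_n = a_{n-1}+a_{n-2}$ by conditioning on the first layer; both are fine.
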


Burstein~\citeyear[Theorem 5.1]{BB} enumerated the words of length $n$ over an alphabet $[k]$ avoiding $120,201,210$.

\begin{theorem} (Burstein)  The generating function for the number of words of length $n$ over an alphabet $[k]$ sortable by a $(2,1)$-pop stack of depth $2$ is given by the coefficient of $x^n y^k$ in $F(x,y)$ where
$$F(x,y) = \frac{(1-x)(1-2x)-((1-x)(1-2x)+x^2)y}{(1-x)(1-2x)-(1-x)(2-3x)y+(1-2x)y^2}.$$
\end{theorem}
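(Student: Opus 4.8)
The plan is to reduce the statement to a known enumeration result of Burstein rather than to derive the generating function from scratch. By Theorem~\ref{T:2-1-2_pop}, a word over the alphabet $[k]$ is sortable by a $(2,1)$-pop stack of depth two if and only if it avoids $120$, $201$, and $210$; so the quantity we must compute is exactly the number of words of length $n$ over $[k]$ in $\Av(120,201,210)$. This is precisely the object Burstein enumerated in \citeyear[Theorem 5.1]{BB}, as already noted in the paragraph preceding the statement. Thus the proof consists of two moves: first invoke Theorem~\ref{T:2-1-2_pop} to translate ``sortable'' into ``avoids $\{120,201,210\}$,'' and then quote Burstein's generating function verbatim, after checking that the indexing conventions (length tracked by $x$, alphabet size tracked by $y$, and whether the count is over an alphabet of size exactly $k$ or at most $k$) match between the two sources.

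If one instead wanted a self-contained derivation, the natural route would be a structural decomposition of $\Av(120,201,210)$-words. A word avoiding these three length-three patterns has a rigid ``almost layered'' shape: reading left to right, once a value $v$ has been seen, any strictly smaller value may only appear in a very constrained way, since two values exceeding a later smaller value would create a $120$ or $210$, and a small value trapped strictly between two copies of a larger configuration creates a $201$. Concretely I would show such a word decomposes as a sequence of ``blocks,'' each block being a weakly decreasing run of at most two distinct values whose top value dominates everything before it, mirroring the restricted-layered-permutation picture but with repetitions of each value allowed. Writing $F(x,y)=\sum_{n,k}a_{n,k}x^ny^k$ and setting up a recurrence for appending a new block (tracking how the current maximum letter used compares to $k$), one obtains a rational generating function; collecting terms should reproduce
$$F(x,y) = \frac{(1-x)(1-2x)-((1-x)(1-2x)+x^2)y}{(1-x)(1-2x)-(1-x)(2-3x)y+(1-2x)y^2}.$$
The bookkeeping of the alphabet parameter $y$ — in particular making sure ``letters available but unused'' are counted with the right multiplicity so that the denominator acquires the $y^2$ term — is where the computation has real content.

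The main obstacle is not mathematical depth but a matching/conventions issue: one must verify that Theorem~\ref{T:2-1-2_pop}, which is stated for general words over $[n]$ (or any alphabet), specializes correctly to words over $[k]$ and that Burstein's ``length $n$ over an alphabet $[k]$'' counts the same set — i.e.\ that no off-by-one in the alphabet size or spurious boundary terms enter when the two statements are glued. Once that is confirmed, the corollary is immediate, and I would present the proof in essentially one line: ``By Theorem~\ref{T:2-1-2_pop} the sortable words over $[k]$ are exactly those in $\Av(120,201,210)$, and their generating function is given by Burstein~\citeyear[Theorem 5.1]{BB}.''
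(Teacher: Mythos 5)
Your proposal matches the paper exactly: the result is quoted from Burstein~\citeyear[Theorem 5.1]{BB} after using Theorem~\ref{T:2-1-2_pop} to identify the sortable words with $\Av(120,201,210)$, and the paper offers no independent derivation. The structural decomposition you sketch as an alternative is plausible but is not needed and is not what the paper does.
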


The case for the enumeration of sortable inversion sequences by a $(2,1)$-pop stack of depth $2$ (in the form of avoidance of $120,201,210$ or otherwise) does not appear in the literature prior to this work, but can be solved using generating trees as demonstrated in Section~\ref{S:InvSeq120_201_210}.  Specifically, the generating function for these sortable inversion sequences is given in Theorem~\ref{T:genFun}.

\section{The generating tree method}~\label{S:GenTree}

\subsection{A formula for the generating function for the number of inversion sequences of length $n$ that avoid $120,201,210$}~\label{S:InvSeq120_201_210}
To enumerate the inversion sequences of length $n$ avoiding $120,201,210$, i.e. the inversion sequences sortable by a $(2,1)$-pop stack of depth $2$, we use the generating tree method shown by Kotsireas,  Y\i ld\i r\i m, and the first author~\citeyear{MY}.  See the recent paper of Pantone~\citeyear{Pantone} for another nice description of using succession rules to enumerate pattern avoiding inversion sequences.

Let $\A=\cup_{n=0}^{\infty} \I_n(120,201,210)$. 
We construct a pattern-avoidance tree $\T$ for the class of pattern-avoiding inversion sequences $\A$ as follows.
\begin{enumerate}
\item The root is $0$ (inversion sequence with one letter), that is, $0\in \T$ at level $1$.
\item Recursively construct the nodes at level $n+1$ of tree $\T$ from the nodes at level $n$ by inserting a new letter at the end of the inversion sequence.  That is, the children of $e=e_0\cdots e_n\in\I_n\cap\A$ are the inversion sequences $e^{*}=e_0\cdots e_nj$ with $j=0,1,\ldots,n+1$ where $e^{*}\in\I_{n+1}\cap\A$.
\end{enumerate}

Now, we relabel the vertices of the tree $\T$ as follows. Define $\T(e)$ to be the subtree consisting of the inversion sequence $e$ as the root and its descendants in $\T$.  We say that $e$ is {\em equivalent} to $e'$, denoted by $e \sim e'$, if and only if $\T(e) \cong \T(e')$ (in the sense of plane trees).  Define an order on the nodes of any tree to be from top to bottom, and within a level from left to right.  Denote by $\T'$ the tree $\T$ where we have replaced, in order, each node $e \in T$ by the first node $e'\in\T$ (from top to bottom and within a level from left to right) in $\T$ such that $\T(e)\cong\T(e')$ assuming that such a node $e'$ exists.

\begin{definition}  A \emph{succession rule}, denoted $x \rightsquigarrow y_1,y_2,\ldots,y_k$, will indicate that the set $\{y_1,y_2,\ldots,y_k\}$ will be such that the generating subtrees rooted at $y_1,y_2,\ldots,y_k$ are equivalent to those rooted at \\ $z_1,z_2,\ldots,z_k$ where $\{z_1,z_2,\ldots,z_k\}$ is the complete set of children of $x$.
\end{definition}

Note that exponents are used in two ways below based on context.  Specifically, when describing succession rules, an exponent of $j$ on a child indicates the number of copies of that child the parent has.  For example, in Lemma~\ref{lem1}, $b_{m,j}$ has $j$ children of the form $b_{m+2-j,1}$.  However, when describing the individual nodes that make up the tree, the exponent refers to copies of the letter in the word (in particular, inversion sequence).  Again, referring to Lemma~\ref{lem1}, we have $a_m$ is the word made up of exactly $m$  $0$s.

\begin{lemma}\label{lem1}
The generating tree $\T'$ is given by root $0$ and the following succession rules
\begin{align*}
a_m&\rightsquigarrow a_{m+1},b_{m,1},\ldots,b_{m,m},\\
b_{m,j}&\rightsquigarrow (b_{m+2-j,1})^j,b_{m+1,j},b_{m+1-j,1},\ldots,b_{m+1-j,m+1-j},
\end{align*}
where $a_m=0^m$ and $b_{m,j}=a_mj$ for $1\leq j \leq m$.  
\end{lemma}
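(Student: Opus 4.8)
The plan is to analyze the tree $\T$ directly, identify the equivalence classes of subtrees, and check that the relabeling produces exactly the stated succession rules. First I would observe that the only inversion sequences in $\A$ that end in $0$ and consist entirely of $0$s are the words $a_m = 0^m$; more importantly, I claim the subtree $\T(e)$ depends only on a small amount of data extracted from $e$. The key structural observation is that, since $e$ avoids $120$, $201$, and $210$, once $e$ contains two distinct nonzero values the set of allowable future letters is severely constrained: appending $j$ to $e = e_0\cdots e_n$ keeps us in $\A$ iff $e$ has no two entries $\geq j$ appearing before a smaller one (that would make $j$ complete a $120$ or $210$) and no entry larger than $j$ followed by one smaller than $j$ (a $201$). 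Working this out shows that a node either is some $a_m$ (all zeros, $m$ letters) or is equivalent to a node of the form $b_{m,j} = a_m j = 0^m j$ — a block of $m$ zeros followed by a single positive letter $j$ with $1 \le j \le m$ — because any extra structure beyond "how many zeros precede the first positive entry, and what is its value" either forces us out of $\A$ on the next relevant insertion or produces an isomorphic generating subtree. I would make this precise by a direct argument: given an arbitrary $e \in \A$, locate the first positive entry, show all subsequent legal children are determined by the pair $(m,j)$ where $m$ is the current length and $j$ is recorded appropriately, and conclude $e \sim b_{m,j}$ (or $e = a_m$).

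Next I would compute the children of $a_m = 0^m$ (a node at level $m$). Its children in $\T$ are $0^m 0 = a_{m+1}$ and $0^m j$ for $j = 1, \dots, m+1$; but $0^m(m+1)$ is not an inversion sequence only if $m+1 > m$, i.e. the letter $j$ at position $m+1$ must satisfy $0 \le j \le m$, so the positive children are $b_{m,1}, \dots, b_{m,m}$. This gives the first rule $a_m \rightsquigarrow a_{m+1}, b_{m,1}, \dots, b_{m,m}$.

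Then I would compute the children of $b_{m,j} = 0^m j$ (a node at level $m+1$). The candidate children are $0^m j\,k$ for $k = 0, 1, \dots, m+1$. Appending $k$: if $k < j$ with $k \ge 1$ we would create the pattern $j$ then $k$ with... actually $j, k$ alone is just a descent; but any later letter $\ell$ with $k \le \ell < j$ or $\ell \ge j$ risks a forbidden pattern, and in fact I expect that appending a positive $k < j$ leads to a subtree isomorphic to that of $b_{m+2-j,1}$ — this is where the exponent $j$ on $b_{m+2-j,1}$ comes from, as there should be $j$ values of $k$ (likely $k \in \{0\}$ together with... no) producing this same equivalence class; I would need to count these carefully and match the multiplicity $j$. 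The child $k = j$ gives $0^m j j = b_{m+1,j}$ (legal since $j \le m \le m+1$). Appending $k > j$: since we then have $j$ followed by a larger $k$, future insertions are constrained, and $0^m j k \sim a$-like behavior with base shifted, giving $b_{m+1-j,1}, \dots, b_{m+1-j,m+1-j}$ after relabeling; the range ends at $m+1-j$ because $k$ sits in position $m+2$ so $k \le m+1$, and subtracting the offset $j$ yields $m+1-j$. I would verify each of these equivalences by exhibiting the tree isomorphism explicitly (matching children of children), which reduces to re-deriving the legal-child sets and seeing they coincide after the index shift.

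The main obstacle I anticipate is pinning down the multiplicity $j$ on the child $b_{m+2-j,1}$ and the exact index ranges — in other words, proving the claimed isomorphisms $\T(0^m j k) \cong \T(b_{\cdot,\cdot})$ rather than merely that the root child-counts agree. This requires showing that two nodes with the same $(m,j)$-type data have isomorphic full subtrees, which I would handle by strong induction on the level: assuming the succession rules correctly describe all nodes at levels $> N$, the subtree of any level-$N$ node is determined by its multiset of children-types, and the analysis above shows this multiset depends only on the type of the node. The base case is immediate since leaves (well, the tree is infinite, so rather: the child-type computation at each finite level is finite and was carried out above). Care is also needed in the boundary cases $j = m$ (where $b_{m+1-j,1},\dots$ has empty range, so $b_{m,m}$ has only the children $(b_{2,1})^m$ and $b_{m+1,m}$) and small $m$, which I would check by hand to confirm consistency with the stated rules.
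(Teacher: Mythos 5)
Your overall plan --- compute the children of $a_m$ and of $b_{m,j}$ in $\T$ and identify each child's subtree with one rooted at an $a$- or $b$-labelled node --- is the same approach as the paper, and your treatment of $a_m$ is fine. But the heart of the lemma is precisely the part you leave open: the isomorphisms $\T(0^mjk)\cong\T(b_{m+2-j,1})$ for the children with $k<j$ (this is where the multiplicity $j$ comes from, namely $k=0,1,\ldots,j-1$, a set you explicitly could not pin down), $\T(0^mjj)\cong\T(b_{m+1,j})$ (note $0^mjj$ and $0^{m+1}j$ are different words, so even this needs an argument, not the equality you assert), and $\T(0^mjk)\cong\T(b_{m+1-j,k-j})$ for $k>j$. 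You say you would ``verify each of these equivalences by exhibiting the tree isomorphism explicitly'' and later name exactly this as the main obstacle, but you never do it, so the proof is not there. The missing ingredient is the pattern-avoidance consequence that makes explicit bijections possible: if $e=0^mjk\pi'$ avoids $120,201,210$ with $0\le k<j$, then every letter of $\pi'$ is either equal to $k$ or at least $j$ (anything in $\{0,\ldots,k-1\}$ completes a $210$ with $j,k$, and anything in $\{k+1,\ldots,j-1\}$ completes a $201$), so sending $k\mapsto 0$ and $x\mapsto x+1-j$ for $x\ge j$ and replacing the prefix by $0^{m+2-j}1$ is a bijection onto the valid continuations of $b_{m+2-j,1}$; similarly, for $k>j$ avoidance of $120$ forces every letter of $\pi'$ to be at least $j$, and subtracting $j$ gives the bijection onto continuations of $0^{m+1-j}(k-j)$. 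Without statements of this kind the claimed equivalences, and in particular the exponent $j$, are unproved.

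A secondary problem is your proposed ``strong induction on the level'': for an infinite tree you cannot deduce that two nodes have isomorphic subtrees from the fact that their children have matching type-multisets at each finite level unless you already know that equal types force isomorphic subtrees --- which is exactly what is to be proved. The clean way out (and the paper's way) is to bypass the induction entirely and exhibit, for each child, an explicit bijection between its set of valid extensions in $\I(120,201,210)$ and that of the claimed representative node; such a bijection immediately yields the isomorphism of the full subtrees. So the structure of your argument is right, but the decisive steps are missing rather than merely routine.
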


\begin{proof}
We label the inversion sequence $0\in\I_0$ by $a_1$. Thus, $a_1\rightsquigarrow a_2,b_{1,1}$. More generally, by the definitions, the children of $a_m\in\T'$ are $a_m0,a_m1,\ldots,a_mm$, which can also be denoted $a_{m+1},b_{m,1},\ldots,b_{m,m}$, respectively. Thus, the rule $a_m\rightsquigarrow a_{m+1},b_{m,1},\ldots,b_{m,m}$ holds.

Also, the children of $b_{m,j}\in\T'$ are $b_{m,j}0,b_{m,j}1,\ldots,b_{m,j}(m+1)$. Notice  $\T(b_{m,j}k)\cong\T(b_{m+2-j,1})$ with $k=0,1,\ldots,j-1$. To show this, we map any inversion sequence $\pi=0^mjk \pi' \in \I_n(120,201,210)$ to $0^{m+2-j}1 \pi''$, where $\pi''$ is obtained from $\pi'$ (there are no letters in $\pi'$ belonging to the set $\{0,\ldots,k-1,k+1,\ldots,j-1\}$) by replacing each letter $x\geq j$ by $x+1-j$ and replacing the letter $k$ by $0$. 
Hence, we see that $\pi\in\I_n(120,201,210)$ if and only if $0^{m+2-j}1\pi''\in I_{n+2-j}(120,201,210)$.

As none of our patterns have repeated letters, it is not hard to see $\T(b_{m,j}j)\cong\T(b_{m+1,j})$.  Here we map any inversion sequence $0^mjj\pi'\in\I_n$ to $0^{m+1}j\pi'\in I_n$ with $1\leq j\leq m$, so this map respects the rule of avoiding $120,201,210$.

In the last cases when $k=j+1,j+2,\ldots,m+1$, we have a similar mapping as with the smaller $k$ values.  Specifically, $\T(b_{m,j}k)\cong\T(b_{m+1-j,k-j})$.
 To see this, again, let $\pi=0^mjk\pi'\in I_n(120,201,210)$. Since $\pi$ avoids $120$, we see that each letter of $\pi'$ is at least $j$. 
Let $\pi''$ be the obtained sequence from $\pi'$ by decreasing each nonzero letter of $\pi'$ by $j$. 
Then the map from $\pi=0^mjk\pi'\in\I_n(120,201,210)$ to $0^{m+1-j}(k-j)\pi''\in\I_{n-j}(120,201,210)$ is a bijection.
 Hence, we have the following rule $b_{m,j}\rightsquigarrow (b_{m+2-j,1})^j,b_{m+1,j},b_{m+1-j,1},\ldots,b_{m+1-j,m+1-j}$, which completes the proof.
\end{proof}

Define $A_m(x)$ (respectively, $B_{m,j}(x)$) to be the generating function for the number of nodes at level $n\geq1$ for the subtree of $\T(B;a_{m})$ (respectively, $\T(B;b_{m,j})$), where the root stays at level $1$. Thus, by Lemma \ref{lem1}, we have
\begin{align}
A_m(x)&=x+xA_{m+1}(x)+x\sum_{j=1}^mB_{m,j}(x),\label{eqA1}\\
B_{m,j}(x)&=x+jxB_{m+2-j,1}(x)+xB_{m+1,j}(x)+x\sum_{k=1}^{m+1-j}B_{m+1-j,k}(x),\label{eqA2}
\end{align}
for all $1\leq j\leq m$.

In order to solve the above recurrence relations, we define
\begin{align*}
A(v)&=\sum_{m\geq1}A_m(x)v^{m-1}, \\
B(v,u)&=\sum_{m\geq1}\sum_{j=1}^mB_{m,j}(x)u^{m-j}v^{m-1}, \text{ and}\\
 C(v)&=\sum_{m\geq1}B_{m,1}(x)v^{m-1}.
 \end{align*}

 Thus Equations \eqref{eqA1}-\eqref{eqA2} can be written as
\begin{align}
A(v)&=\frac{x}{1-v}+\frac{x}{v}(A(v)-A(0))+xB(v,1),\label{eqA3}\\
B(v,u)&=\frac{x}{(1-v)(1-uv)}+\frac{x}{uv(1-v)^2}(C(vu)-C(0))+\frac{x}{uv}(B(v,u)-B(v,0))+\frac{x}{1-v}B(uv,1),\label{eqA4}\\
C(v)&=\frac{x}{1-v}+\frac{2x}{v}(C(v)-C(0))+xB(v,1),\label{eqA5}
\end{align}
where Equation \eqref{eqA5} is the translation of \eqref{eqA2} with $j=1$.

To solve the system from Equations \eqref{eqA4}-\eqref{eqA5}, we make the following guess based on the first terms of the generating functions $B(v,1)$, $C(v)$, and $A(v)$:
\begin{align}
B(v,1)&=\frac{1}{(1-v)^2}C(v)-\frac{v}{(1-v)^2}A(v).\label{eqA6}
\end{align}

Next, we solve the system from Equations \eqref{eqA4}-\eqref{eqA6}, which satisfies the original system from Equations \eqref{eqA4}-\eqref{eqA5}. By substituting the expression of $B(v,1)$ from Equation \eqref{eqA6} into Equation \eqref{eqA4}, and solving for $C(v)$, we obtain
\begin{align}
C(v)&=\frac{v^3-2v^2+2vx+v-x}{vx}A(v)+\frac{(1-v)^2}{v}A(0)+v-1.\label{eqA7}
\end{align}
From here, use Equation \eqref{eqA6} and Equation \eqref{eqA7} to rewrite Equation \eqref{eqA5} as
\begin{align}
&\frac{2x^2-3x(x+1)v+(5x+1)v^2-2(x+1)v^3+v^4}{v^2x}A(v)\notag\\
&=\frac{2x-(3x+1)v+2(x+1)v^2-v^3}{v^2}A(0)-\frac{2x}{v}C(0)-\frac{(v-2x)(v-1)}{v}.
\label{eqA8}
\end{align}
Let $K(v)=2x^2-3x(x+1)v+(5x+1)v^2-2(x+1)v^3+v^4$ be the kernel of this equation. Note that for the kernel equation $K(v)=0$ there are four roots, say $v_1,v_2,v_3,v_4$, where
\begin{align*}
v_1&=1+\frac{-1+\sqrt{5}}{2}x+\frac{-5+4\sqrt{5}}{5}x^2+\cdots,\\
v_2&=1+\frac{-1-\sqrt{5}}{2}x+\frac{-5-4\sqrt{5}}{5}x^2+\cdots,\\
v_3&=2x+2x^2+10x^3+\cdots,\\
v_4&=x-x^3-x^4+\cdots.
\end{align*}
By taking Equation \eqref{eqA8} with either $v=v_3$ or $v=v_4$, we obtain a system of equations in $A(0)$ and $C(0)$. We solve this system, obtaining
\begin{align*}
A(0)&=-\frac{(v_1+v_2-2x-1)v_1v_2}{v_1v_2(v_1+v_2)-2(1+x)v_1v_2+2x}, \\
  C(0)  &=-\frac{v_1v_2(v_2-1)(v_1-1)-(v_1^2v_2+v_1v_2^2-2v_1^2
-3v_1v_2-2v_2^2+2v_1+2v_2)x}{2x(v_1v_2(v_1+v_2)-2(1+x)v_1v_2+2x)}\\
&\qquad + \frac{2(v_1v_2-2v_1-2v_2+2)x^2}{2x(v_1v_2(v_1+v_2)-2(1+x)v_1v_2+2x)}..
\end{align*}
Using the expressions of $A(0),C(0)$, we obtain an explicit formula for $A(v)$ from Equation \eqref{eqA8}. We can then obtain an explicit formula for $C(v)$ from Equation \eqref{eqA7}.  Then we can use that formula to get an explicit formula for $B(v,1)$ from Equation \eqref{eqA6}.  Finally, we have the information we need to obtain an explicit formula for $B(v,u)$ from Equation \eqref{eqA4}. We omit the presentations of the expressions $A(v),C(v),B(v,1),B(v,u)$ because they are very lengthy.  However, these expressions satisfy the system of Equations \eqref{eqA3}-\eqref{eqA5}. Hence, we can state the following result.
\begin{theorem}~\label{T:genFun}
The generating function for the number of inversion sequences in $\I_n$ that avoid $120,201,210$ and thus are sortable by a $(2,1)$-pop stack of depth $2$ is given by
\begin{align*}
A(0)&=-\frac{(v_1+v_2-2x-1)v_1v_2}{v_1v_2(v_1+v_2)-2(1+x)v_1v_2+2x}\\
&=x+2x^2+6x^3+23x^4+101x^5+484x^6+2468x^7+13166x^8+72630x^9+411076x^{10}\\
&+2374188x^{11}+13938018x^{12}+82932254x^{13}+499031324x^{14}+3031610924x^{15}\\
&+18568429963x^{16}+114541486785x^{17}+710973143614x^{18}+4437415155234x^{19}\\
&+27831038618735x^{20}+175318861863701x^{21}+1108762012137252x^{22}\\
&+7037137177329268x^{23}+44808588430903068x^{24}+\cdots.
\end{align*}
\end{theorem}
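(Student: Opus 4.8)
The plan is to carry the succession rules of Lemma~\ref{lem1} through to a closed form by the kernel method, along the lines already sketched in the paragraphs preceding the statement. First I would attach to each node label a generating function counting, by level with the root at level $1$, the nodes of its generating subtree: $A_m(x)$ for the subtree rooted at $a_m$ and $B_{m,j}(x)$ for the subtree rooted at $b_{m,j}$. Reading the two rules $a_m\rightsquigarrow a_{m+1},b_{m,1},\dots,b_{m,m}$ and $b_{m,j}\rightsquigarrow (b_{m+2-j,1})^j,b_{m+1,j},b_{m+1-j,1},\dots,b_{m+1-j,m+1-j}$ off directly --- each child contributes $x$ times its own subtree series, plus an $x$ for the root itself --- yields the recurrences \eqref{eqA1}--\eqref{eqA2}. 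Since $a_1$ is the root of all of $\T'$ and level $n$ of $\T'$ consists exactly of the length-$n$ inversion sequences in $\A$, the object we want is $A_1(x)$.

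Next I would introduce the catalytic variables $A(v)=\sum_{m\ge1}A_m(x)v^{m-1}$, $B(v,u)=\sum_{m\ge1}\sum_{j=1}^m B_{m,j}(x)u^{m-j}v^{m-1}$, and the slice $C(v)=\sum_{m\ge1}B_{m,1}(x)v^{m-1}$; multiplying \eqref{eqA1}--\eqref{eqA2} by the appropriate monomials in $v,u$ and summing turns them into the functional equations \eqref{eqA3}--\eqref{eqA5}. The variable $C$ must be carried along separately because the term $(b_{m+2-j,1})^j$ in the $b$-rule feeds only into the $j=1$ labels, producing the $C(uv)$ term in \eqref{eqA4}, while \eqref{eqA5} is just \eqref{eqA2} specialized to $j=1$. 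The heart of the computation is the ansatz \eqref{eqA6} relating $B(v,1)$ to $A(v)$ and $C(v)$, which I would obtain by computing enough initial coefficients of $A(v)$, $B(v,1)$, $C(v)$ and recognizing the pattern. Substituting \eqref{eqA6} into \eqref{eqA4} linearizes that equation in $C$ and solves to give $C(v)$ as an explicit rational expression in $A(v)$ and $A(0)$, namely \eqref{eqA7}; plugging \eqref{eqA6} and \eqref{eqA7} into \eqref{eqA5} then collapses everything to the single kernel equation \eqref{eqA8} for $A(v)$, carrying the two unknown specializations $A(0)$ and $C(0)$.

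The remaining step is the kernel method applied to \eqref{eqA8}. The kernel $K(v)=2x^2-3x(x+1)v+(5x+1)v^2-2(x+1)v^3+v^4$ degenerates to $v^2(1-v)^2$ at $x=0$, so two of its four roots, $v_3\sim 2x$ and $v_4\sim x$, are formal power series in $x$ with zero constant term, while $v_1,v_2\to1$. Because $v_3,v_4$ are $O(x)$, one may substitute them into $A(v)$ $x$-adically; each substitution annihilates the left-hand side of \eqref{eqA8} and leaves one linear relation among $A(0)$ and $C(0)$, so the two relations together determine $A(0)$ and $C(0)$. Solving this $2\times2$ system gives the displayed formula for $A(0)$; back-substitution then recovers $A(v)$ from \eqref{eqA8}, $C(v)$ from \eqref{eqA7}, $B(v,1)$ from \eqref{eqA6}, and finally $B(v,u)$ from \eqref{eqA4}.

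The main obstacle is that \eqref{eqA6} is introduced as a guess, so the proof is not finished until one verifies that the quadruple $A(v)$, $C(v)$, $B(v,1)$, $B(v,u)$ thus produced actually satisfies the \emph{original} system \eqref{eqA3}--\eqref{eqA5}, not just the reduced system \eqref{eqA4}--\eqref{eqA6}. Since \eqref{eqA3}--\eqref{eqA5} pin down all the $A_m$ and $B_{m,j}$ uniquely --- each level of $\T'$ is finite, so the coefficients are forced level by level --- this consistency check, a lengthy but purely mechanical rational-function verification, certifies that the constructed $A(0)$ is the generating function in question. Expanding $A(0)$ as a power series and matching $x+2x^2+6x^3+23x^4+\cdots$ against a direct enumeration of $\I_n(120,201,210)$ for the first few $n$ provides a final sanity check; one could in principle avoid the guess altogether by solving \eqref{eqA3}--\eqref{eqA5} via an iterated (obstinate) kernel argument, but the ansatz route is considerably shorter.
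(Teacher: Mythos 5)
Your proposal follows essentially the same route as the paper's own derivation: translate the succession rules of Lemma~\ref{lem1} into the recurrences \eqref{eqA1}--\eqref{eqA2}, pass to the catalytic series $A(v)$, $B(v,u)$, $C(v)$ to get \eqref{eqA3}--\eqref{eqA5}, guess the relation \eqref{eqA6}, reduce to the kernel equation \eqref{eqA8}, use the two small roots $v_3,v_4$ to solve for $A(0)$ and $C(0)$, and finally verify the constructed expressions against the original system. Your added remark that the level-by-level finiteness forces uniqueness, so the verification step genuinely closes the argument, is a correct and welcome elaboration of what the paper leaves implicit, but it is not a different method.
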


\subsection{The generating tree for the pop stack sortable inversion sequences}~\label{S:popgen}

As in the previous case, one can show that the generating tree (based on the algorithm given in \cite{MY}) of $I_n(120, 201, 1010)$, that is, of the pop stack sortable inversion sequences, is given by a root $a_1$ and the rules
\begin{align*}
a_m&\rightsquigarrow a_{m+1},b_{m,1},\ldots,b_{m,m},\\
b_{m,j}&\rightsquigarrow c_{m+1-j,1},\ldots,c_{m,j},b_{m+1,j},b_{m+1-j,1},\ldots,b_{m+1-j,m+1-j},\\
c_{m,j}&\rightsquigarrow c_{m+2-j,1},\ldots,c_{m+1,j},a_{m+3-j},b_{m+2-j,1},\ldots,b_{m+2-j,m+2-j},
\end{align*}
where $a_m=0^m$, $b_{m,j}=0^mj$ and $c_{m,j}=0^mj(j-1)$.

For any sequence of nodes $f_{m,j}$, we define $F_{m,j}(x)$ as the generating functions for the number of nodes in the subtrees $\mathcal{T}(B;f_{m,j})$, where $f\in \{a,b,c\},F\in \{A,B,C\}$. Define
\begin{align*}
A(v)&=\sum_{m\geq1}A_m(x)v^{m-1}, \\
B(v,u)&=\sum_{m\geq1}\sum_{i=1}^mB_{m,i}v^{m-1}u^{m-i},\\
C(v,u)&=\sum_{m\geq1}\sum_{i=1}^mC_{m,i}v^{m-1}u^{m-i}.
\end{align*}
Then the generating function for the case {120,201,1010} is $\frac{x}{1-x}+xB(x,1)$, where $B(x,v)$ satisfies
\begin{align*}
A(v)&=\frac{x}{1-v}+\frac{x}{v}(A(v)-A(0))+xB(v,1),\\
B(v,u)&=\frac{x}{(1-v)(1-uv)}+\frac{x}{uv}(B(v,u)-B(v,0))+\frac{x}{1-v}C(v,u) +\frac{x}{1-v}B(vu,1),\\
C(v,u)&=\frac{x}{(1-v)(1-uv)}+\frac{x}{uv(1-v)}(C(v,u)-C(v,0))\\
& +\frac{x}{u^2v^2(1-v)}(A(uv)-A(0)-uv\frac{\partial}{\partial v}A(v)\mid_{v=0}) +\frac{x}{uv(1-v)}(B(vu,1)-B(0,0)).\\
\end{align*}
By applying this system $20$ times starting from $A(x,v)=B(x,v,u)=C(x,v,u)=0$, we obtain the first $20$ coefficients of $A(0)$ as
\begin{align*}
A(0)&=x+2x^2+6x^3+23x^4+101x^5+485x^6+2488x^7+13414x^8+75126x^9+433546x^{10}\\
&+2563335x^{11}+15461646x^{12}+94835817x^{13}+589997530x^{14}+3715451178x^{15}\\
&+23645541066x^{16}+151874732111x^{17}+983428159871x^{18}\\
&+6413887925931x^{19}+42100271440339x^{20}+\cdots.
\end{align*}

\subsection{Inversion sequences sortable by a pop stack of depth two}~\label{S:ps_d2}

Recall from Corollary~\ref{C:popstackD} that the inversion sequences sorted by a pop stack of depth two are precisely those which avoid $P=\{120,201,210,1010\}$. We can define the generating tree $T_P$ for inversion sequences that avoid $P$ as having a root $a_1$ and satisfying the following rules:
\begin{align*}
	a_m & \rightsquigarrow a_{m+1},b_{m,1},\ldots,b_{m,m},\\
	b_{m,j} & \rightsquigarrow (c_{m+1-j})^j,b_{m+1,j},b_{m+1-j,1},\ldots,b_{m+1-j,m+1-j},\quad 1\leq j\leq m,\\
	c_m & \rightsquigarrow c_{m+1},a_{m+2},b_{m+1,1},\ldots,b_{m+1,m+1},
\end{align*}
where $a_m=0^m, b_{m,j}=0^mj$, and $c_m=0^m10$. 
We label the inversion sequence $0$ by $a_1$, so the root of the tree $T_P$ is indeed $a_1$. Now, let us show that the succession rules hold in $T_P$. Based on the algorithm introduced in [17] (for more examples and proofs, see~\cite{CJM, CM23}), we have the following:
\begin{itemize}
\item The children of $a_m$ in $T_P$ are $a_mj$ with $j=0,1,\ldots,m$, which are equal to $a_{m+1},b_{m,1},\ldots,b_{m,m}$. Hence, the first succession rule holds. 
\item The children of $b_{m,j}$ in $T_P$ are $0^mji$ with $i=0,1,\ldots,m+1$. But, note that the set of inversion sequences of the form $0^mji\pi’$ with $0\leq i\leq j-1$ that avoid $P$ can be mapped bijectively to the set of inversion sequences of the form $0^{m+1-j}10\pi''$ that avoid $P$ (by mapping each letter $i$ to $0$ and each letter $s\geq j$ to $s+1-j$ and writing the first initial $0^m$ as $0^{m+1-j}$).  Hence, $T_P(0^mji)\cong T_P(c_{m+1+j})$, for all $i=0,1,\ldots,j-1$. 

Now, let us look at the remaining children of $b_{m,j}$ in $T_P$, which are $0^mji$ with $i=j,j+1,j+2,\ldots,m+1$. Clearly, an inversion sequence $0^mjj\pi’$ avoids $P$ if and only if the inversion sequence $0^{m+1}j\pi'$ avoids $P$, so $T_P(0^mjj)\cong T_P(0^{m+1}j)$. Next consider the inversion sequence $\pi=0^mji\pi’$ with $j+1\leq i\leq m+1$.  Notice that $\pi$ avoids $P$ if and only if $0^{m+1-j}(i-j)\pi''$ avoids $P$ (by mapping each letter $s\geq j$ to $s-j$ and writing the initial $0^mj$ as $0^{m+1-j}$). Hence, the children of $b_{m,j}$ in $T_P$ are $(c_{m+1-j})^j,b_{m+1,j},b_{m+1-j,1},\ldots,b_{m+1-j,m+1-j}$, which proves the second succession rule. 
\item The children of $c_m$ in $T_P$ are $0^m100,0^m101,\ldots,0^m10(m+2)$. Clearly, an inversion sequence $0^m100\pi’$ avoids $P$ if and only if $0^{m+1}10\pi’$ avoids $P$. Also, an inversion sequence $0^m10i\pi’$ avoids $P$ if and only if $0^{m+2}\pi''$ avoids $P$ (here $\pi''$ is obtained by mapping each letter $s$ in $i\pi’$ to $s-1$). Thus the children of $c_m$ in $T_P$ are $c_{m+1},a_{m+2},b_{m+1,1},\ldots,b_{m+1,m+1}$, which shows that the third succession rule holds.  
\end{itemize}

\section{Open Problems}~\label{Open_Problems}

There are many directions left to continue the study of sortable inversion sequences.  The most obvious question is the elusive enumeration problem for the inversion sequences of length $n$ which avoid $120$.  Already interesting as a standalone pattern avoidance question, the fact that these inversion sequences are exactly those sortable by the classic stack sorting algorithm by Knuth~\citeyear{knuth:the-art-of-comp:1} increases the intrigue.  The first author and Shattuck~\citeyear[Section 4]{inversion2015} obtained a recurrence relation for these inversion sequences yielding a functional equation in three variables (sequence number A263778 in OEIS~\cite{OEIS}).  We note that the construction method used here was later generalized by Testart~\citeyear{Testart} as part of his work to complete the enumeration of 22 open cases of pattern-avoiding inversion sequences.

\begin{open} Is there a nicer combinatorial formula for the number of inversion sequences of length $n$ are stack sortable, i.e. avoid $120$?
\end{open}

Next, while we have classified the pop stack sortable inversion sequences and words and provided a generating tree for the inversion sequence case  in Section~\ref{S:popgen}, the following combinatorial enumeration is also still open.

\begin{open} How many inversion sequences of length $n$ are pop stack sortable, i.e. avoid $120,201,1010$?
\end{open}

And the cases restricting pop stacks to particular depths in an analogous way to ~\cite{elder:permutations-ge:, EG2018, EG2021, ELR2016} and/or the expansion in allowable pushes analogous to ~\cite{atkinson:generalized-sta:} can be interesting for words and inversion sequences.

\begin{open} Classify and enumerate the inversion sequences of length $n$ that are $(r,1)$-pop stack sortable for pop stacks of depth $k$ for $r \geq 2$ and/or $k \geq 2$.
\end{open}

Another avenue of exploration is to consider other sorting machines or algorithms.  For instance, the authors considered a sorting algorithm on a single stack prioritizing outputting the next correct entry of the sequence until no other moves were available and then either returning the elements in order to be sorted again~\cite{MSS19} or reversing the output for the next pass~\cite{MSS19Rev}.  In particular, the $2$-reverse-pass sortable permutation class has a reasonably small basis, namely $\{2413, 2431,23154\} = \{1302, 1320, 12043\}$ that may prove tractable.  

\begin{open} How many inversion sequences of length $n$ are $2$-reverse-pass sortable?
\end{open}

\acknowledgements
\label{sec:ack}
The authors are grateful for the helpful comments from the referees.

\bibliographystyle{plainnat}
\bibliography{InvSeq.bib}
\label{sec:biblio}

\end{document}